\documentclass[11pt,reqno,oneside]{amsart}

\usepackage{appendix}

\usepackage{amsmath,amssymb,amsthm,amsfonts,mathrsfs,bbm}
\usepackage{fullpage}
\usepackage{amscd,bbm}
\usepackage[shortlabels]{enumitem}
\usepackage{xcolor}
\usepackage[colorlinks=true,
linkcolor=blue,
anchorcolor=blue,
citecolor=red
]{hyperref}

\allowdisplaybreaks 

\oddsidemargin=0.3 true in 
\evensidemargin=0.3 true in
\textwidth=6.5 true in 

\hoffset -0.3 true in 
\headheight=12pt
\headsep=25pt 
\topmargin=0 true in 
\textheight=8.7 true in

\newtheorem{theorem}{Theorem}[section] 
\newtheorem{lemma}[theorem]{Lemma}

\newtheorem{proposition}[theorem]{Proposition}

\theoremstyle{definition}
\newtheorem{definition}[theorem]{Definition}

\theoremstyle{remark}

\newcommand{\Z}{\mathbb{Z}}

\newcommand{\C}{\mathbb{C}}

\newcommand{\GL}{\mathrm{GL}}
\newcommand{\PGL}{\mathrm{PGL}}

\newcommand{\SL}{\mathrm{SL}}

\newcommand{\Art}{\mathrm{Art}_F}

\newcommand{\on}{\operatorname}

\newcommand{\Ind}{\operatorname{Ind}}

\newcommand{\Irr}{\operatorname{Irr}}

\newcommand{\g}{\widehat{G}}
\newcommand{\ra}{\rightarrow}
\newcommand{\s}{\simeq}
\newcommand{\vp}{\varphi}
\newcommand{\der}{\on{der}}
\newcommand{\ab}{\on{ab}}

\newcommand{\Fr}{\on{Fr}}

\author{Kwangho Choiy}
\address{School of Mathematical and Statistical Sciences,
Southern Illinois University,
Carbondale, IL 62901-4408,
USA}
\email{kchoiy@siu.edu}

\author{Doyon Kim}
\address{Mathematical Institute of the University of Bonn, 53115 Bonn, Germany}
\email{kimdoyon@math.uni-bonn.de}
\author{Razan Taha}
\address{Department of Mathematics,
University of Michigan,
Ann Arbor, MI 48109-1043,
USA}
\email{tahar@umich.edu}
\author{Pan Yan}
\address{Department of Mathematics, The University of Arizona, Tucson, AZ 85721, USA}
\email{panyan@arizona.edu}

\makeatletter
\@namedef{subjclassname@2020}{\textup{}2020 Mathematics Subject Classification}
\makeatother
\date{\today}

\title{Component groups for non-supercuspidal $L$-parameters for $p$-adic $\mathrm{SL}_3$}

\subjclass[2020]{Primary 22E50; Secondary 22E35, 11F70}
\keywords{Component groups, $L$-parameters, internal structure of tempered $L$-packets}

\begin{document}

\begin{abstract}
 We explicitly classify all the component groups associated to the non-supercuspidal, tempered $L$-parameters of $\mathrm{SL}_3(F)$ for a $p$-adic field $F$ of characteristic $0$ by direct case-by-case computations in $\mathrm{PGL}_3(\mathbb{C})$, following earlier work for $\mathrm{SL}_2$ by Labesse-Langlands and Shelstad.
\end{abstract}

\maketitle

\setcounter{tocdepth}{1}
\tableofcontents

\section{Introduction}
The internal structure of tempered $L$-packets is one of the intrinsic parts of the classical local Langlands conjecture for $p$-adic groups. While this conjecture has been well-formulated by Arthur \cite{art06} and later by Kaletha \cite{kalsim, kalrigd15-global} alongside the relationship to Arthur's conjectures, little is classified explicitly.

The main goal of this paper is to explicitly describe all the component groups in terms of matrices in $\PGL_3(\C)$ for the non-supercuspidal, tempered $L$-parameters of $\SL_3(F)$, where $F$ is a $p$-adic field of characteristic zero. The approach is to follow Labesse-Langlands \cite{ll79} and Shelstad \cite{shel79} and compute the centralizers of the images of the $L$-parameters directly in $\PGL_3(\C)$.
We should remark that this paper omits the supercuspidal $L$-parameters due to lack of the full classification of supercuspidal $L$-parameters of $\GL_3(F)$.

To be precise, based on \cite{gk82, hs11, abps13}, we denote by $F$ a $p$-adic field of characteristic zero and set $G(F)=\SL_3(F)$ for the introduction. We refer the reader to Section \ref{depcomponentgp} for the relevant detailed definitions in what follows. Note that the $L$-group of $G(F)$ is $^L G =\widehat{G} = \PGL_3(\C)$. 

Given a tempered $L$-parameter $[\vp]$ of $G(F)$, 
we have
\[
{Z}(\widehat{G}_{sc}) = \mathbf{\mu_3}(\C),~~ Z(\widehat{G})^{\Gamma} = 1,~\text{ and } \widehat Z_{\vp, sc} = \mathbf{\mu_3}(\C) / (\mathbf{\mu_3}(\C) \cap S_{[\vp], sc}^{\circ}).
\] 
Now, we consider the exact sequence
\[
1 \longrightarrow \C^{\times} \longrightarrow {\GL}_3(\C) \overset{pr}{\longrightarrow} {\PGL}_3(\C) = \widehat{G} \longrightarrow 1.
\]
Due to \cite[Th\'{e}or\`{e}m 8.1]{la85} (also, see \cite{weil74, he80}), we have an 
$L$-parameter $\vp$ for $\GL_3(F)$
\[
\vp : W_F \times {\SL}_2(\C) \rightarrow {\GL}_3(\C)
\]
such that ${pr} \circ \vp=[\vp]$.
By the local Langlands conjecture for $\GL_n$ \cite{ht01, he00, scholze13}, 
we have a unique irreducible representation $\sigma \in \Irr(\GL_3(F))$ associated to the $L$-parameter $\vp$ and the $L$-packet $\Pi_{[\vp]}(G)$ is the set consisting of all the irreducible, inequivalent constituents in the restriction of $\sigma$ to $G(F)$.
Thus, we have the following diagram
\begin{equation} 
\label{a diagram}
\begin{CD}
1 @>>> \mathbf{\mu_3}(\C) @>>> {\SL_3}(\C) @>>> {\PGL_3}(\C) @>>> 1 \\
@.{||} @.{\cup} @.{\cup} @.\\
1 @>>> Z(\g_{sc}) @>>> S_{\varphi,sc}(\g) @>>> S_{\varphi}(\g) @>>> 1 \quad .
\end{CD} 
\end{equation}
We consider a character $\zeta_{\SL_3}$ of ${Z}(\widehat{G}_{sc})$ 
which corresponds to $\SL_3$
via the Kottwitz isomorphism, and it is clear that $\zeta_{\SL_3} = \mathbbm{1}$, so that the internal structure of $L$-packets of $\SL_3(F)$ is in bijection with the set of equivalence classes of irreducible representations of the connected components in $\PGL_3(\C)$ of $L$-parameters of $\SL_3(F)$. 

Our main result in this framework of the internal structure of $L$-packets is to explicitly describe all the possible component groups 
\[
\mathcal{S}_{[\vp]}(\SL_3):=\pi_0\left(S_{[\varphi]}(\PGL_3(\C))\right)
\]
for non-supercuspidal, tempered $L$-parameters $[\varphi]$ of $\SL_3(F)$. 
We divide the analysis into three disjoint cases: unitary principal series $L$-parameters in Subsection \ref{subsec-ups}, non-supercuspidal discrete $L$-parameter in Subsection \ref{subsec-st}, and non-discrete tempered $L$-parameters in Subsection \ref{subsec-nondisc}. All such cases are computed directly in $\PGL_3(\C)$. 
\begin{theorem} \label{mainthmintro}
    Let $[\vp]$ be a non-supercuspidal, tempered $L$-parameter of $\SL_3(F)$, where $F$ is a $p$-adic field of characteristic $0$. Then the following holds:
    \begin{enumerate}
        \item[(i)] (Proposition \ref{ups}) If $\vp$ is an $L$-parameter of $\GL_3(F)$ corresponding to an irreducible, unitary principal series representation, then either 
$\mathcal{S}_{[\vp]}\cong 1$ or $\mathcal{S}_{[\vp]}\cong \Z/3\Z$.
        \item[(ii)] (Proposition \ref{pro-st}) If $\vp$ corresponds to the Steinberg representation twisted by a unitary character $\chi$ on $F^\times$, then $\mathcal{S}_{[\vp]}$ is trivial.
        \item[(iii)] (Propositions \ref{pro-dihedral}, \ref{pro-exceptional}, and \ref{pro-st-char}) If $\vp$ corresponds to $\displaystyle \Ind_{P_{2,1}}^{\GL_3(F)}(\sigma \otimes \chi)$, where $P_{2,1}$ is the standard parabolic subgroup whose Levi subgroup is $\GL_2(F) \times \GL_1(F),$ $\sigma$ is a discrete series representation of $\GL_2(F)$ and $\chi$ is a unitary character of $F^\times$, then $\mathcal{S}_{[\vp]}$ is trivial.
    \end{enumerate}
\end{theorem}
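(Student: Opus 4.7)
The same three-step strategy handles all three cases: (1) realize $\vp$ explicitly as a representation $W_F\times\SL_2(\C)\to\GL_3(\C)$; (2) compute $S_{[\vp]}(\PGL_3(\C))$ by lifting each candidate $[g]\in\PGL_3(\C)$ to $g\in\GL_3(\C)$ and imposing the twisted intertwining relation $g\,\vp(w)\,g^{-1}=z(w)\,\vp(w)$ for some character $z:W_F\times\SL_2(\C)\to\C^\times$, which is necessarily trivial on $\SL_2(\C)$; (3) read off the component group $\pi_0(S_{[\vp]}(\PGL_3(\C)))$ directly.

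For (i), a unitary principal series parameter has the form $\vp=\chi_1\oplus\chi_2\oplus\chi_3$ with each $\chi_i$ a unitary character of $W_F$. Any $g$ satisfying the twisted intertwining relation lies in the normalizer of the diagonal torus in $\GL_3(\C)$, so it factors as $g=t\sigma$ with $t$ a torus element and $\sigma$ a permutation matrix. The torus part projects to the connected identity component $(\C^\times)^2\subset\PGL_3(\C)$. The permutation $\sigma$ contributes a centralizing class precisely when $\chi_{\sigma^{-1}(i)}=z\chi_i$ for all $i$ and a fixed character $z$. Iterating this relation shows that a $3$-cycle forces $z^3=1$ and the arithmetic-progression shape $\chi_i=z^{i-1}\chi_1$, which contributes a cyclic $\Z/3\Z$ to $\pi_0$ when $z$ is a nontrivial cubic character. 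A transposition forces $z^2=1$ together with an equality of two of the $\chi_j$'s, in which case that permutation already lies inside the connected centralizer $\GL_2\times\GL_1$ of the coincidence partition and makes no contribution to $\pi_0$. Enumerating the multiplicity patterns among the $\chi_i$ thus yields only the two possibilities $\mathcal{S}_{[\vp]}\cong 1$ or $\mathcal{S}_{[\vp]}\cong\Z/3\Z$.

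For (ii), write the twisted-Steinberg parameter as $\vp=\chi\otimes r_3$, where $r_3:\SL_2(\C)\to\GL_3(\C)$ is the unique three-dimensional irreducible representation, so $\vp$ is itself irreducible as a representation of $W_F\times\SL_2(\C)$. Schur's lemma gives centralizer $\C^\times\cdot I$ in $\GL_3(\C)$, hence trivial centralizer in $\PGL_3(\C)$; the twist $z$ must be trivial because $r_3$ is irreducible and $\vp$ and $z\vp$ differ only by the central character on $W_F$. For (iii), write $\vp=\psi\oplus\chi$, where $\psi$ is the irreducible $2$-dimensional $L$-parameter of the discrete series $\sigma$ on $\GL_2(F)$, whether of twisted-Steinberg, dihedral, or exceptional type. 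Any intertwiner $g$ with $g\,\vp(w)\,g^{-1}=z(w)\vp(w)$ must preserve the isotypic decomposition $\C^2\oplus\C$, because the twisted summand $z\psi$ is still irreducible and $2$-dimensional and so cannot intertwine with the $1$-dimensional $z\chi$. Writing $g=\diag(A,c)$, the relation $c\,\chi(w)=z(w)\,\chi(w)$ with $c$ constant forces $z=1$, after which Schur's lemma applied to $\psi$ gives $A\in\C^\times I_2$. Hence $S_{[\vp]}(\PGL_3(\C))$ is a connected one-dimensional torus and $\mathcal{S}_{[\vp]}=1$.

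\textbf{Main obstacle.} The delicate analysis is in case (i): pinning down the precise arithmetic-progression configuration of unitary characters that produces a genuine $3$-cycle symmetry, and verifying that every transposition-type symmetry is absorbed into the identity component of $S_{[\vp]}(\PGL_3(\C))$ through the connected centralizer of the corresponding coincidence stratum. Cases (ii) and (iii) then reduce cleanly to Schur's lemma applied to the irreducible $3$-dimensional or $2$-dimensional factor of $\vp$.
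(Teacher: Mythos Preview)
Your proposal is correct and reaches the same conclusions, but the method is genuinely different from the paper's. The paper proceeds by explicit entry-by-entry matrix computations: for each case it writes down representative matrices $A$ in $\mathrm{Im}([\vp])$ and solves the linear system $XA-\xi AX=0$ coordinate-wise (Lemmas 4.1--4.2, 4.4, 4.6--4.8, 4.10), and in case (iii) it splits further according to whether the projective image of $\vp_\sigma$ is dihedral, tetrahedral, or octahedral, or whether $\sigma$ is Steinberg, doing a separate matrix calculation in each subcase. Your argument replaces all of this with representation theory: for (ii) and (iii) you invoke Schur's lemma on the irreducible summands of $\vp$, which immediately forces the off-diagonal blocks of $g$ to vanish and the twist $z$ to be trivial, without ever needing to know what $\mathrm{Im}(\vp_\sigma)$ looks like inside $\PGL_2(\C)$. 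This is more conceptual and uniformly handles all discrete series $\sigma$ at once, whereas the paper's approach has the advantage of being completely explicit and giving concrete matrix descriptions of each centralizer.

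One small point in your case (i): the claim that $g$ lies in the normalizer of the diagonal torus presupposes that the $\chi_i$ are pairwise distinct. You do gesture at the coincidence strata (``enumerating the multiplicity patterns'') and correctly observe that when two characters agree the relevant permutation is absorbed into the connected $\GL_2\times\GL_1$, but the logic would be cleaner if you separated the multiplicity cases at the outset rather than deriving a coincidence from the transposition condition. The paper has the same tacit distinctness assumption in its Proposition 4.3, so this is not a discrepancy between the two arguments.
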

Our result shows that the component groups are all trivial,
except for a unitary principal series $L$-parameter, in which case $\Z/3\Z$ appears. 
As mentioned above, our main idea is to follow the approach of Labesse-Langlands \cite{ll79} and Shelstad \cite{shel79}. A case-by-case analysis with detailed computations is provided in Section \ref{mainresultsection}.

We remark that our result on component groups provides explicit descriptions for their various finite groups such as Knapp-Stein $R$-groups for reducibilities of parabolic inductions, stabilizers in Weyl groups and character groups, appearing in some earlier work, for instance, \cite[Proposition 1.9]{sh83}, \cite[Theorems 2.4 and 3.4]{go94sl}, and \cite[Theorem 4.3]{gk82}.
We also note that our matrix computations above are expected to apply to the centralizers of certain $L$-parameters of $\SL_n$ for $n >3$ (see Section \ref{closingremarks}).

\subsection*{Acknowledgements}
This project was started at the Rethinking Number Theory Workshop in Summer 2022 and we thank the organizers and Neelima Borade. The first author was supported by a gift from the Simons Foundation (\#840755). The second author was partially supported by ERC Advanced Grant 101054336 and Germany’s Excellence Strategy grant EXC-2047/1 - 390685813. The fourth author was partially supported by an AMS-Simons Travel Grant. 

\section{Background}
\subsection{The Weil Group}
Let $F$ be a $p$-adic field of characteristic zero with ring of integers $\mathfrak{o}$. For a given uniformizer $\varpi\in \mathfrak{o}$, let $\mathfrak{p}=\varpi \mathfrak{o}$ be the prime ideal of $\mathfrak{o}$ and denote the cardinality of the residue field $\mathfrak{f}=\mathfrak{o}/\varpi \mathfrak{o}$ by $q$, which is a power of $p$. Fix an algebraic closure $\overline{F}$ of $F$. Let $F^{\mathrm{unr}}$ be the maximal unramified extension of $F$ in $\overline{F}$ and $\overline{\mathfrak{o}}$ be the integral closure of $\mathfrak{o}$ in $F^{\mathrm{unr}}$. Then $\overline{\mathfrak{f}}:=\overline{\mathfrak{o}}/\varpi \overline{\mathfrak{o}}$ is an algebraic closure of $\mathfrak{f}$.

The Weil group $W_F$ is the dense subgroup of $\Gamma=\mathrm{Gal}(\overline{F}/F)$ consisting of elements which induce on $\overline{\mathfrak{f}}$ an integer power of the automorphism $x\mapsto x^q$. Fix a geometric Frobenius element $\Fr\in W_F$, which satisfies $\Fr(x^q)=x$ on $\overline{\mathfrak{f}}$. Then $W_F$ is a semidirect product $W_F=\langle \Fr\rangle \ltimes I_F$, where $I_F$ is the inertia subgroup. Denote by $W_F^{\der}$ the closure of the commutator subgroup of $W_F$, and write $W_F^{\ab}=W_F/W_F^{\der}$. By local class field theory, 
there is a canonical continuous group homomorphism
\begin{equation*}
\Art: W_F\to F^\times	
\end{equation*}
which induces an isomorphism $F^\times \cong W_F^{\ab}$, normalized so that $\varpi$ maps to the class of the geometric Frobenius $\Fr$ in $W_F^{\ab}$. The normalized valuation on $F^\times$ gives a homomorphism $|\cdot|: W_F\to q^{\Z}$ which satisfies $\ker |\cdot|=I_F$ and $|\Fr|=q^{-1}$. 
\subsection{Langlands Parameters}
 A Weil-Deligne representation of $W_F$ is a triple $(\rho, \mathcal{G}, N)$, where $\mathcal{G}$ is a complex Lie group whose identity component $\mathcal{G}^\circ$ is reductive, $\rho:W_F\to \mathcal{G}$ is a homomorphism which is continuous on $I_F$ such that $\rho(\Fr)$ is semisimple in $\mathcal{G}$, and $N$ is a nilpotent element in the Lie algebra $\mathfrak{g}$ of $\mathcal{G}$ such that $\rho(w)N\rho(w)^{-1}=|w|\cdot N$ for all $w\in W_F$. 
Two representations $(\rho, \mathcal{G}, N)$ and $(\rho^\prime, \mathcal{G}^\prime, N^\prime)$ are equivalent if there is an element $g\in \mathcal{G}^\circ$ such that $\rho^\prime=g\rho g^{-1}$ and $N^\prime=gNg^{-1}$. We denote the Weil-Deligne representation by $(\rho,N)$ when the group $\mathcal{G}$ is understood.
 
For a quasi-split connected reductive algebraic group $\mathbb{G}$ defined over $F$, let $G=\mathbb{G}(F)$ denote the $F$ points of $\mathbb{G}$. The $L$-group of $G$ is given by ${}^LG=\widehat{G}\rtimes W_F$, where $\widehat{G}$ is the complex Langlands dual group of $G$.

A Langlands parameter, or $L$-parameter, is a $\hat{G}$-conjugacy class of $G$-relevant $L$-homomorphisms
\begin{equation*}
 \varphi: W_F\times \SL_2(\C)\to {}^LG. 
\end{equation*}
In particular, $\varphi$ is trivial on an open subgroup of $I_F$, $\varphi(\Fr)$ is semisimple, and $\varphi|_{\SL_2(\C)}$ is a homomorphism of algebraic groups over $\C$. Moreover, any parabolic subgroup ${}^LP$ of ${}^LG$ that contains $\textrm{Im}(\vp)$ corresponds to a parabolic subgroup $P$ of $G$ that is defined over $F$. Let $\Phi(G)$ be the set of $L$-parameters of $G$. 

There is a bijection $(\rho, {}^LG, N)\longleftrightarrow \varphi$ between the equivalence classes of Weil-Deligne representations and the $\hat{G}$-conjugacy classes of homomorphisms $\varphi:W_F\times\SL_2(\C)\to {}^LG$ (\cite[Proposition 2.2]{gr10}). Given an $L$-parameter $\varphi:W_F\times\SL_2(\C)\to {}^LG$, we obtain a Weil-Deligne representation $(\rho, {}^L G, N)$ by setting, for any $w\in W_F$,
\begin{equation*}
\rho(w)=\varphi\left(w,\begin{pmatrix}
 |w|^{1/2} & 0 \\ 0 & |w|^{-1/2}
 \end{pmatrix}\right) \quad \text{and} \quad \displaystyle e^N=\varphi\left(1,\begin{pmatrix}
 1&1\\0&1
 \end{pmatrix}\right). 	
\end{equation*}
We often use the term $L$-parameter to refer to both $\varphi$ and the representation $(\rho,\mathcal{G},N)$ which corresponds to $\varphi$ under this bijection. 
\subsection{Local Langlands Correspondence for $\GL_n(F)$}
The local Langlands correspondence predicts a canonical partition of the set $\mathrm{Irr}(G)$ of isomorphism classes of irreducible, smooth, complex representations of $G$ as
\begin{equation*}
\Irr(G)=\bigcup_{\varphi\in \Phi(G)}\Pi_\varphi.
\end{equation*}
Here, the set $\Pi_{\varphi}$ is called the $L$-packet of $\varphi$. Moreover, the induced map
 \begin{equation} \label{eq-LLC-GLn}
 \mathcal{L}_G: \mathrm{Irr}(G) \to \Phi(G)
 \end{equation} 
is surjective, finite-to-one, and $\mathcal{L}_G(\Pi_{\varphi})=\varphi$. 

The local Langlands correspondence for $\GL_n(F)$, established in \cite{ht01, he00, scholze13}, says that the map \eqref{eq-LLC-GLn} is a bijection. Hence, all the $L$-packets $\Pi_\varphi \in \mathrm{Irr}({\GL_n})$ are singletons. Note that, since $\GL_n(F)$ is split over $F$, the $W_F$ action on $\widehat{G}$ is trivial, and we may consider the $L$-parameters to be $L$-homomorphisms $ \varphi:W_F\times \SL_2(\C)\to \GL_n(\C)$. We give an explicit description of the local Langlands correspondence for $\GL_2(F)$ and a partial description for $\GL_3(F)$ in Section \ref{sec-LLC-GL2-3}.

\subsection{Component Groups and the Internal Structure of Tempered $L$-packets} \label{depcomponentgp}
An $L$-parameter $\vp$ does not, in general, parametrize the irreducible representations in the corresponding $L$-packet $\Pi_\vp$. It is conjectured that an enhancement of the $L$-parameter with an irreducible representation of the component group of $\vp$ is the appropriate object to study. We refer the reader to \cite{art06, kalsim, kalrigd15-global} where one may observe two formulations and their relations (c.f. \cite[Section 4.1]{choiymulti}).

Following Arthur's formulation \cite{art06, art12}, let $Z(\g)$ be the center of $\g$, $\g_{ad}$ be the adjoint group $\g / Z (\g)$ of $\g$, and $\g _{sc}$ be the simply connected cover of the derived group $\g _{der}$ of $\g$. 

Let $Z_{\g}(\mathrm{Im}(\vp))$ denote the centralizer in $\g$ of the image of an $L$-parameter $\vp$. Define the complex reductive group
\begin{equation*}
 S_{\varphi}(\g)=Z_{\g}(\mathrm{Im}(\vp)) / Z(\g)^{\Gamma},
\end{equation*}
where $Z(\g)^{\Gamma}$ is the group of invariants of $Z(\g)$ under the action of the Galois group $\Gamma$, and note that $S_{\varphi}(\g)$ is a subgroup of $\g_{ad}$. Pulling back $S_{\varphi}(\g)$ under the quotient map $\g _{sc} \to \g_{ad}$ and considering it as a subgroup of $\g_{sc}$, we get $S_{\varphi,sc}(\g)$. Hence, we have the short exact sequence 
\begin{equation*}
 1 \ra Z(\g_{sc}) \ra S_{\varphi,sc}(\g) \ra S_{\varphi}(\g) \ra 1.
\end{equation*}
Define the component group of $S_{\varphi,sc}(\g)$ by 
\begin{equation*}
 \mathcal{S}_\vp=\pi_0(S_{\varphi,sc}(\g))=S_{\varphi,sc}(\g) / S_{\varphi,sc}(\g)^\circ
\end{equation*}
Then there exists a short exact sequence 
%{\color{red} citation?} 
\begin{equation*}
 1 \ra \widehat{Z}_{\varphi,sc}(G) \ra \mathcal{S}_\vp \ra \mathcal{S}_{\varphi}(\g) \ra 1,
\end{equation*}
where $\widehat{Z}_{\varphi,sc}(G) = Z(\g_{sc}) / (Z(\g _{sc}) \cap S_{\varphi}(\g)^o)$ and $\mathcal{S}_\vp(\g) = \pi_0(S_{\varphi}(\g))$. 

Consider an inner twist corresponding to $G^* \ra G$ for a quasi-split inner form $G^*$ of $G$. Consider a character $\zeta_G$ of $Z(\g_{sc})$ whose restriction to $Z(\g_{sc})^{\Gamma}$ gives the class of the inner form $G$ of $G^*$ by the Kottwiz isomorphism (\cite[Theorem 1.2]{kot86}). 

If $\varphi$ is a tempered $L$-parameter of $G$, it is conjectured (refer to \cite[Section 3]{art06} and \cite[Section 9]{art12}) that there exists a one-to-one correspondence \begin{equation*}
 \Pi_{\varphi}(G) \longleftrightarrow \Irr(S_{\varphi,sc}(\g), \zeta_G),
\end{equation*}
where $\Irr(S_{\vp, sc}(\g), \zeta_G)$ denotes the set of irreducible representations of $S_{\varphi,sc}(\g)$ that are equivariant under the pullback of $\zeta_G$ to $\widehat{Z}_{\varphi, sc}(G)$. We remark that there is another formulation by Kaletha in terms of a different finite group and that both formulations are equivalent \cite{kalsim, kalrigd15-global}.

Note that, since $\SL_3(F)$ is split and simply connected, and since $Z(\PGL_3(\C))$ is trivial, the component group of an $L$-parameter $\vp:W_F\times \SL_2(\C)\to \PGL_3(\C)$ is given by the simpler equation 
\begin{equation*}
 \mathcal{S}_\vp=Z_{\PGL_3(\C)}(\mathrm{Im}(\vp)) / Z_{\PGL_3(\C)}(\mathrm{Im}(\vp))^\circ.
\end{equation*}

\subsection{Notation}
Throughout this paper, $B_n$ denotes the upper triangular Borel subgroup of $\GL_n$ and $P_{n_1,\cdots,n_k}$ denotes the standard parabolic subgroup of $\GL_n$ corresponding to the partition $n=n_1+\cdots+n_k$. For any matrix $A$, we write $a_{ij}$ for the entry in its $i^{th}$ row and $j^{th}$ column. Moreover, if $A\in\GL_n(\C)$, we write $[A]$ for its projection into $\PGL_n(\C)$. Similarly, we write $[\vp]= {pr} \circ \vp$ for the projection of an $L$-parameter $\vp:W_F\times \SL_2(\C)\to \GL_n(\C)$ into $\PGL_n(\C)$. 

Given a Levi decomposition $P=MN$ of any parabolic subgroup $P$ of $G$ and a representation $\sigma \in \Irr(M)$, $\Ind_{P(F)}^{G(F)}\sigma$ is the normalized parabolic induction with trivial $N(F)$-action.

\section{$L$-parameters of $\GL_2(F)$ and $\GL_3(F)$} \label{sec-LLC-GL2-3}
\subsection{Non-Supercuspidal $L$-parameters of $\GL_2(F)$} Let $\chi, \chi_1$, and $\chi_2$ be characters of $F^\times$. 
When $\frac{\chi_1}{\chi_2}\not=|\cdot|^{\pm 1}$, the induced representation $\Ind_{B_2(F)}^{\GL_2(F)}(\chi_1\otimes\chi_2)$ is an irreducible principal series representation, which corresponds under the bijection \eqref{eq-LLC-GLn} to the $L$-parameter 
\begin{equation*}
 \left((\chi_1\circ \Art)\oplus (\chi_2\circ \Art),\begin{pmatrix} 0 & 0 \\ 0 & 0\end{pmatrix}\right).
\end{equation*}
If $\frac{\chi_1}{\chi_2}=|\cdot|$, the principal series representation $\Ind_{B_2(F)}^{\GL_2(F)}(\chi_1\otimes\chi_2)=\Ind_{B_2(F)}^{\GL_2(F)}(\chi|\cdot|^{\frac{1}{2}}\otimes\chi|\cdot|^{-\frac{1}{2}})$, with $\chi=\chi_2 |\cdot|^{\frac{1}{2}}$, is reducible and has an irreducible quotient isomorphic to the one-dimensional representation $\chi \circ \det$, which corresponds under the bijection \eqref{eq-LLC-GLn} to the $L$-parameter
\begin{equation*}
 \left((\chi\circ \Art)\oplus (\chi\circ \Art),\begin{pmatrix} 0 & 0 \\ 0 & 0\end{pmatrix}\right).
\end{equation*}
If $\frac{\chi_1}{\chi_2}=|\cdot|^{-1}$, the induced representation $\Ind_{B_2(F)}^{\GL_2(F)}(\chi_1\otimes\chi_2)=\Ind_{B_2(F)}^{\GL_2(F)}(\chi|\cdot|^{-\frac{1}{2}}\otimes\chi|\cdot|^{\frac{1}{2}})$, with $\chi=\chi_1 |\cdot|^{\frac{1}{2}}$, is reducible and has an irreducible quotient isomorphic to the $\chi$-twisted Steinberg representation $\mathrm{St}_2(\chi)$, which corresponds to the $L$-parameter 
\begin{equation*}
 \left( (\chi\circ \Art)\oplus (\chi\circ \Art), \begin{pmatrix} 0& 1\\ 0 &0\end{pmatrix}\right).
\end{equation*}
We refer the reader to \cite{ku94} for more details.

\subsection{Supercuspidal $L$-parameters of $\GL_2(F)$} The supercuspidal representations of $\GL_2(F)$ correspond under the bijection \eqref{eq-LLC-GLn} to the $L$-parameter 
\begin{equation*}
 \left(\rho,\begin{pmatrix} 0 & 0 \\ 0 & 0\end{pmatrix}\right),
\end{equation*}
where $\rho$ is an irreducible, two-dimensional representation of $W_F$. Furthermore, supercuspidal $L$-parameters are classified by the image of $\vp:W_F\times \SL_2(\C) \to \GL_2(\C)$ projected to $\PGL_2(\C)$, following \cite{tate79, ht01, he00, scholze13} (see also \cite{bh06}, Sections 40, 41, and 42). We shall use these $\GL_2$ supercuspidal parameters in Subsections \ref{dihedralsubsection} and \ref{exceptionalparametersection}.
\begin{definition}
An irreducible $L$-parameter $\vp \in \Phi({\GL}_2)$ 
is called \textit{exceptional} (or \textit{primitive}) if the restriction $\vp|_{W_F}$ is not of the form $\Ind_{W_E}^{W_F} \theta$ for some quadratic extension $E$ over $F$. 
\end{definition}

\begin{definition}
An irreducible $L$-parameter $\vp \in \Phi({\GL}_2)$ is called \textit{dihedral} with respect to a quadratic extension $E$ over $F$ if 
$\vp|_{W_F} \s \Ind_{W_E}^{W_F} \theta$, or equivalently if 
$(\vp|_{W_F}) \otimes \omega_{E/F} \s (\vp|_{W_F})$, for a quadratic character $\omega_{E/F}$ corresponding to $E/F$ via local class field theory.
\end{definition} 
\begin{proposition} \label{prop-supercuspidal}
The image of a dihedral $L$-parameter $\vp$ in $\PGL_2(\C)$ is the dihedral group of order 4, which is generated (up to conjugacy) by the matrices
\begin{equation*}
 \begin{pmatrix}
 0 & 1 \\ 1 & 0 \end{pmatrix} \, \text{ and }\, \begin{pmatrix}
 -1 & 0 \\ 0 & 1 \end{pmatrix}.
\end{equation*}
The image of an exceptional $L$-parameter $\vp$ in $\PGL_2(\C)$ can either be the tetrahedral group $A_4$ or the octahedral group $S_4$. 
\end{proposition}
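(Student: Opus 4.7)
The plan is to treat the two cases separately, combining the classification of finite subgroups of $\PGL_2(\C)$ with structural properties of the Weil group $W_F$.

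For the dihedral case, I would make the induction $\vp|_{W_F}\s \Ind_{W_E}^{W_F}\theta$ explicit. Fixing a coset representative $s\in W_F\setminus W_E$ and working in the basis $\{v,\,s\cdot v\}$ of the induced space, elements $w\in W_E$ act diagonally by $\diag(\theta(w),\theta^s(w))$, while $s$ itself acts by an anti-diagonal matrix. Passing to $\PGL_2(\C)$, the anti-diagonal part is conjugate, via a diagonal matrix, to $\begin{pmatrix}0&1\\1&0\end{pmatrix}$, and the image of the diagonal part is the cyclic image of the ratio character $\theta/\theta^s$, which is nontrivial since the irreducibility of $\vp$ forces $\theta\ne\theta^s$. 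After the normalization, the order-two element of this cyclic image takes the form $\diag(-1,1)$, and one checks directly that the two involutions commute modulo scalars, so they generate the asserted Klein four subgroup of $\PGL_2(\C)$.

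For the exceptional case I would rely on two standard facts: (a) Klein's classification, which lists every finite subgroup of $\PGL_2(\C)$ as cyclic, dihedral $D_n$, or one of $A_4,S_4,A_5$; and (b) the prosolvability of $W_F$, which follows from the filtration with pro-$p$ wild inertia, pro-cyclic tame inertia, and $\Z$ on top, and which rules out $A_5$ as a continuous finite image. Since the projective image of a supercuspidal $\vp$ is finite, facts (a) and (b) restrict it to cyclic, dihedral, $A_4$, or $S_4$. A cyclic image would force $\vp|_{W_F}$ to be abelian and hence reducible, contradicting irreducibility; a dihedral image would force $\vp|_{W_F}\s \Ind_{W_E}^{W_F}\theta$ for some quadratic $E/F$, contradicting the exceptional hypothesis.

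The main obstacle is this last step: recovering an honest induction from the bare fact that the projective image is dihedral. I would handle this by pulling back the unique normal cyclic subgroup of index two in the dihedral image to an index-two normal subgroup $W_E\subset W_F$, which corresponds via local class field theory to a quadratic extension $E/F$. The restriction $\vp|_{W_E}$ then has cyclic (hence abelian) projective image, so it decomposes as $\theta\oplus\theta^s$ for some character $\theta$ of $W_E$, and Mackey's criterion identifies $\vp|_{W_F}$ with $\Ind_{W_E}^{W_F}\theta$, completing the argument.
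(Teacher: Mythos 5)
Your exceptional case is correct, and it is in fact more than the paper supplies: the paper states Proposition \ref{prop-supercuspidal} without proof, citing the literature. Klein's list of finite subgroups of $\PGL_2(\C)$, prosolvability of $W_F$ to exclude $A_5$, reducibility to exclude a cyclic projective image, and the recovery of an induction from a projectively dihedral image by pulling back the index-two cyclic normal subgroup and applying Clifford--Mackey theory to $\vp|_{W_E}$ --- this is the standard argument and your execution of it is sound (you do quietly use that the projective image of an irreducible smooth representation of $W_F$ is finite, which is standard but worth a citation).

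The dihedral case, however, has a genuine gap, and it sits exactly where your write-up glides from ``the cyclic image of $\theta(\theta^s)^{-1}$ is nontrivial'' to ``its order-two element is $\diag(-1,1)$.'' Your setup correctly shows that the projective image is generated by the involution $\left[\left(\begin{smallmatrix}0&1\\1&0\end{smallmatrix}\right)\right]$ together with the cyclic group $C=\left\{\left[\diag\bigl(\theta(w)\theta^s(w)^{-1},1\bigr)\right]:w\in W_E\right\}$, on which the involution acts by inversion; that is, the image is the dihedral group $D_n$ of order $2n$, where $n$ is the order of the character $\theta(\theta^s)^{-1}$. Irreducibility only gives $n\geq 2$. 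Nothing forces $n=2$, and it is false in general: for $E/F$ unramified one can choose $\theta$ so that $\theta(\theta^s)^{-1}$ has any order $n$ dividing $q+1$, and for $n=3$ the projective image is $S_3$, which is not the Klein four group and does not even contain a conjugate of $\diag(-1,1)$. So your argument proves the image is \emph{some} $D_n$; it equals the Klein four group precisely when $(\theta(\theta^s)^{-1})^2=1$, i.e.\ when $\vp$ is induced from every quadratic extension of $F$. This is really a defect of the proposition as stated rather than of your method --- your computation is the honest one --- but as a proof of the claim as written it does not close. If you want to salvage the downstream use in Proposition \ref{pro-dihedral}, the right statement to prove is that the projective image is $D_n$ with explicit generators $\left[\diag(\zeta_n,1)\right]$ and the anti-diagonal involution, and then to check that the centralizer computation there still yields a connected centralizer for every $n\geq 2$.
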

\subsection{Tempered, Non-Supercuspidal $L$-parameters for $\GL_3(F)$} \label{sec-LLC-GL3}
The irreducible, tempered, non-supercuspidal representations of $\GL_3(F)$ arise by parabolic induction from (essentially) discrete series representations of the Levi subgroups of $\GL_3$ (refer to \cite[Section 1.2]{ku94} for more details). Since the only partitions of $n=3$ are $n=1+1+1$ and $n=2+1$, the only possibilities are inducing characters of $F^\times$ or discrete series representations of $\GL_2(F)$. As in the $\GL_2(F)$ case, the irreducible, principal series representations of $\GL_3(F)$ are of the form
\begin{equation*}
\lambda=\Ind_{P_{1,1,1}(F)}^{\GL_3(F)}(\chi_1 \otimes \chi_2 \otimes \chi_3).
\end{equation*}
Here, $\chi_i$ with $i=1,2,3$ are unitary characters of $F^\times$, and $\chi_i\chi_j^{-1}\neq |\cdot|$ for all $1\leq i,j\leq 3$. These representations correspond to $L$-parameters of the form 
\begin{equation}\label{Langlands parameter 1}
 \left((\chi_1\circ \Art)\oplus (\chi_2\circ \Art)\oplus (\chi_3\circ \Art),\left(\begin{smallmatrix} 0 & 0 & 0\\ 0&0 & 0\\ 0&0 & 0\end{smallmatrix}\right)\right).
\end{equation}
When the representation $\lambda$ is reducible, the Steinberg representation twisted by a unitary character $\chi$, $\mathrm{St}_3(\chi)$, is realized as the unique, irreducible quotient of $\lambda$, and it corresponds to the $L$-parameter
\begin{equation}\label{Langlands parameter 2}
 \left((\chi\circ \Art)\oplus (\mathbbm{1}\circ\Art)\oplus (\chi^{-1}\circ \Art),\left(\begin{smallmatrix} 0 & 1 & 0\\ 0&0 & 1\\ 0&0 & 0\end{smallmatrix}\right)\right).
\end{equation}
Tempered, non-supercuspidal representations of $\GL_3(F)$ arising from the partition $n=2+1$ are of the form 
\begin{equation*}
\Ind_{P_{2,1}(F)}^{\GL_3(F)}(\sigma \otimes \chi)
\end{equation*} 
where $\sigma$ is a discrete series representation of $\GL_2(F)$ and $\chi$ is a unitary character of $F^\times$. When $\sigma$ is supercuspidal, the corresponding $L$-parameter is of the form
\begin{equation*}
 \left(\rho\oplus(\chi\circ \Art),\left(\begin{smallmatrix} 0 & 0 & 0 \\ 0 & 0&0\\ 0 & 0&0\end{smallmatrix}\right)\right),
\end{equation*}
where $\rho$ is an irreducible, two-dimensional representation of $W_F$. When $\sigma$ is the unitary twist of the Steinberg representation $\mathrm{St}_2(\mu)$, the corresponding $L$-parameter is of the form
\begin{equation} \label{stparameter}
 \left((\mu\circ \Art)\oplus (\mu\circ \Art)\oplus(\chi\circ \Art),\left(\begin{smallmatrix} 0 & 1 & 0 \\ 0 & 0&0\\ 0 & 0&0\end{smallmatrix}\right)\right).
\end{equation}
This classifies all the tempered, non-supercuspidal $L$-parameters of $\GL_3(F)$.
\section{Classification of Component Groups} \label{mainresultsection} 
The irreducible, tempered representations of $\SL_3(F)$ which are not supercuspidal are constructed by restriction from those of $\GL_3(F)$. Here, restriction means that the given irreducible representation space of $\GL_3(F)$ can be considered as a representation space of $\SL_3(F)$, which is now possibly reducible.

On the other hand, the Langlands dual group of $\SL_3(F)$ is $\PGL_3(\C)$, so an $L$-parameter for $\SL_3(F)$ is a map 
\begin{equation*}
 [\varphi]: W_F\times \SL_2(\C)\to \PGL_3(\C)
\end{equation*}
which arises from projecting the image in $\GL_3(\C)$ of an $L$-parameter $\vp$ of $\GL_3(F)$. Hence, the results of Section \ref{sec-LLC-GL3} are reformulated in what follows to express the $L$-parameters as $L$-homomorphisms $\vp$ instead of Weil-Deligne representations $(\rho,N)$. 

We proceed to compute the component groups of the various $L$-parameters on a case-by-case basis. We begin with a lemma on centralizers of matrices.
\begin{lemma} \label{lem-centralizers}
Let $A$ and $B$ be two matrices in $\GL_3(\C)$. Then $[B]\in Z_{\PGL_3(\C)}([A])$
if and only if $AB=\xi BA$ for some $\xi \in \C$ satisfying $\xi^3=1$. 
\end{lemma}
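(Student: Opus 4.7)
The plan is to unwind the definition of the centralizer in $\PGL_3(\C)$ and reduce the statement to an elementary determinant computation.

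First, I would observe that $[B]$ centralizes $[A]$ in $\PGL_3(\C)$ precisely when $[AB]=[BA]$, i.e.\ when $AB$ and $BA$ represent the same class in $\PGL_3(\C)$. Working in the short exact sequence $1\to\C^\times\to\GL_3(\C)\to\PGL_3(\C)\to 1$, this is equivalent to the existence of some scalar $\xi\in\C^\times$ with $AB=\xi BA$. This gives the forward direction up to the normalization of $\xi$.

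To pin down $\xi$, I would apply $\det$ to both sides of $AB=\xi BA$. Since $\det$ is multiplicative and $\det(AB)=\det(A)\det(B)=\det(BA)$, and both $A,B\in\GL_3(\C)$ are invertible so their determinants are nonzero, dividing yields $\xi^3=1$. Note that the exponent $3$ here is exactly the matrix size, which is why the statement picks out cube roots of unity.

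Conversely, if $AB=\xi BA$ for some $\xi$ with $\xi^3=1$ (in particular $\xi\neq 0$), then $AB$ and $BA$ differ by a nonzero scalar, so $[AB]=[BA]$ in $\PGL_3(\C)$, meaning $[B]\in Z_{\PGL_3(\C)}([A])$.

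The argument is essentially immediate once the definition is unpacked; there is no real obstacle, only the bookkeeping of passing between $\GL_3(\C)$ and $\PGL_3(\C)$. The only point worth flagging is that the determinant computation genuinely forces $\xi^3=1$ rather than just $\xi\in\C^\times$, which is what makes the resulting component groups nontrivial (and, as the paper will show, yields the $\Z/3\Z$ that appears in Theorem \ref{mainthmintro}(i)).
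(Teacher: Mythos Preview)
Your proof is correct and follows essentially the same approach as the paper: both unwind the definition of the centralizer in $\PGL_3(\C)$ to obtain $AB=\xi BA$ for some $\xi\in\C^\times$, and then take determinants to force $\xi^3=1$. The only cosmetic difference is that the paper phrases the first step via the commutator $ABA^{-1}B^{-1}\in Z(\GL_3(\C))$, which is equivalent to your formulation.
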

\begin{proof}
Let $[A]$ and $[B]$ denote the cosets in $\PGL_3(\C)$ represented by the matrices $A,B\in\GL_3(\C)$. Clearly, if $B\in Z_{\GL_3(\C)}(A)$, then $[B]\in Z_{\PGL_3(\C)}([A])$. On the other hand, 
\begin{equation*}
\begin{split}
[B]\in Z_{\PGL_3(\C)}([A]) &\iff [A][B]=[B][A] \\
&\iff [A][B][A^{-1}][B^{-1}]=[I_3].
\end{split}
\end{equation*}
Hence, $ABA^{-1}B^{-1}\in Z(\GL_3(\C))$. Note that the elements of $Z(\GL_3(\C))$ are of the form $zI_3$ for some $z\in \C^\times$, which implies that there exists $z\in \C^\times$ such that $ABA^{-1}B^{-1}=zI_3$. Taking determinants, it is clear that $z^3=1$. Hence, 
\begin{equation*}
 Z_{\PGL_3(\C)}([A])=\left\{[B]\in \PGL_3(\C): AB=\xi BA \text{ for some third root of unity $\xi$ }\right\}. \qedhere
\end{equation*}
\end{proof}
\subsection{Unitary Principal Series $L$-parameters of $\SL_3(F)$} \label{subsec-ups}
Let $\vp$ be an $L$-parameter of $\GL_3(F)$ corresponding to an irreducible, unitary principal series representation. We study the projection of $\mathrm{Im}(\vp)$ to $\PGL_3(\C)$. 
\begin{lemma} \label{lemma-prin-1}
 Let $A\in\GL_3(\C)$ be a matrix of the form 
 \begin{equation*}
 A= \begin{pmatrix}
a_{1} & 0 & 0 \\
 0 &a_2 & 0\\
 0 & 0 &1 \\
\end{pmatrix}
\end{equation*}
where $a_1, a_2$, and 1 are pairwise distinct. The set of matrices $X\in\GL_3(\C)$ that commute with the matrix $A$ is given by 
 \begin{equation*}
 \left\{\begin{pmatrix}x_{11}& 0&0 \\ 0& x_{22}&0\\0&0&x_{33}\end{pmatrix}:x_{ii}\in\C^\times \text{for }i=1,2,3\right\}. 
\end{equation*}
\end{lemma}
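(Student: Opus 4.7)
The plan is to do a direct entry-by-entry comparison of $AX$ and $XA$ for a general $X = (x_{ij}) \in \GL_3(\C)$. Set $a_3 = 1$ so that $A = \diag(a_1, a_2, a_3)$. Since $A$ is diagonal, the $(i,j)$-entry of $AX$ is $a_i x_{ij}$, while the $(i,j)$-entry of $XA$ is $x_{ij} a_j$. Thus the commutation relation $AX = XA$ is equivalent to the system of scalar equations
\begin{equation*}
(a_i - a_j)\, x_{ij} = 0 \qquad \text{for all } 1 \le i,j \le 3.
\end{equation*}

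Because $a_1, a_2, a_3$ are pairwise distinct by assumption, the factor $a_i - a_j$ is nonzero whenever $i \ne j$. Consequently $x_{ij} = 0$ for every off-diagonal position, so any $X$ commuting with $A$ must be diagonal. Conversely, any diagonal matrix trivially commutes with the diagonal matrix $A$. Finally, the requirement $X \in \GL_3(\C)$ translates precisely to $x_{11}, x_{22}, x_{33} \in \C^\times$, which gives exactly the set claimed in the lemma.

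There is no real obstacle here; the argument is purely linear algebra and uses only the distinctness of the diagonal entries of $A$. The only care needed is the bookkeeping of indices when writing out $AX$ versus $XA$, which is why I would present the computation in the compact form $(a_i - a_j)x_{ij} = 0$ rather than expanding all nine equations.
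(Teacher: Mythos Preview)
Your proof is correct and follows essentially the same approach as the paper: both compute $XA - AX$ entry by entry and use the pairwise distinctness of $a_1, a_2, 1$ to force the off-diagonal entries of $X$ to vanish. Your presentation via the compact formula $(a_i - a_j)x_{ij} = 0$ is slightly more streamlined than the paper's explicit $3\times 3$ display, but the content is identical.
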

\begin{proof}
A matrix $X\in \GL_3(\C)$
commutes with $A$ whenever 
\begin{equation*}
M=XA-AX=
\begin{pmatrix}
0 & (a_2-a_1)x_{12} & (1-a_1)x_{13} \\
(a_1-a_2)x_{21} &0 & (1-a_2)x_{23} \\
(a_1-1)x_{31} & (a_2-1) x_{32}&0 \\
\end{pmatrix} =0.
\end{equation*}
Since $a_1, a_2$, and 1 are pairwise distinct, the only solution to $M=0$ is 
\begin{equation*}
 x_{12}=x_{13}=x_{21}=x_{23}=x_{31}=x_{32}=0. \qedhere
\end{equation*}
\end{proof}

\begin{lemma} \label{lemma-prin-2}
 Let $A\in\GL_3(\C)$ be a matrix of the form 
 \begin{equation*}
 A= \begin{pmatrix}
a_{1} & 0 & 0 \\
 0 &a_2 & 0\\
 0 & 0 &1 \\
\end{pmatrix}
\end{equation*}
where $a_1, a_2$, and 1 are pairwise distinct. For $k\in \{1,2\}$, let $\xi_k=e^{\frac{2k\pi i}{3}}$ and let 
\[\Delta_k= \{X\in\GL_3(\C):
 XA-\xi_k AX=0\}.\]
\begin{enumerate}
 \item[(i)] If $\{a_1,a_2\}\neq \{\xi_1,\xi_2\}$, then $\Delta_1=\Delta_2=\emptyset$.
 \item[(ii)] If $a_1=\xi_{1}$ and $a_2=\xi_2$, then
\[\Delta_1=\left\{\begin{pmatrix}
0 & x_{12} & 0 \\
 0 &0 & x_{23} \\
 x_{31} & 0 & 0 \\
\end{pmatrix}:x_{ij}\in \C^\times \right\} \quad\text{and}\quad \Delta_2=\left\{\begin{pmatrix}
0 & 0 & x_{13} \\
x_{21} &0 & 0 \\
0 & x_{32} & 0 \\
\end{pmatrix}:x_{ij}\in \C^\times \right\}. \]
\item[(iii)] If $a_1=\xi_{2}$ and $a_2=\xi_1$, then
\[\Delta_1=\left\{\begin{pmatrix}
0 & 0 & x_{13} \\
x_{21} &0 & 0 \\
0 & x_{32} & 0 \\
\end{pmatrix}:x_{ij}\in \C^\times \right\} \quad\text{and}\quad \Delta_2=\left\{\begin{pmatrix}
0 & x_{12} & 0 \\
 0 &0 & x_{23} \\
 x_{31} & 0 & 0 \\
\end{pmatrix}:x_{ij}\in \C^\times \right\}. \]
\end{enumerate}
\end{lemma}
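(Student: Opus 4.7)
The plan is to analyze the equation $XA - \xi_k AX = 0$ entry-by-entry, in the spirit of Lemma \ref{lemma-prin-1}. Writing $A = \diag(a_1, a_2, a_3)$ with $a_3 := 1$, a direct computation yields
\[
(XA - \xi_k AX)_{ij} \;=\; x_{ij}\,(a_j - \xi_k\, a_i)
\]
for all $1 \le i, j \le 3$. Hence each entry $x_{ij}$ must vanish unless $a_j = \xi_k a_i$.

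I would then observe that $a_j = \xi_k a_i$ with $i = j$ would force $\xi_k = 1$, contradicting $\xi_k^3 = 1$ with $\xi_k \neq 1$. So the diagonal of any solution $X$ is zero. Since $X \in \GL_3(\C)$, its support must contain a transversal, i.e.\ a permutation $\sigma \in S_3$ with $(i,\sigma(i))$ admissible for each $i$. As the diagonal is forbidden, $\sigma$ must be a derangement, and the only derangements in $S_3$ are the two $3$-cycles $\sigma_+ = (1\,2\,3)$ and $\sigma_- = (1\,3\,2)$.

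Next, I would convert admissibility of $\sigma_{\pm}$ into conditions on $(a_1, a_2)$ by solving $a_{\sigma(i)} = \xi_k a_i$ with $a_3 = 1$. A short computation, using $\xi_k^3 = 1$, shows that $\sigma_+$ is admissible exactly when $a_1 = \xi_k$ and $a_2 = \xi_k^{-1}$, while $\sigma_-$ is admissible exactly when $a_1 = \xi_k^{-1}$ and $a_2 = \xi_k$. In either case $\{a_1, a_2\} = \{\xi_1, \xi_2\}$, which already gives (i): when $\{a_1, a_2\} \neq \{\xi_1, \xi_2\}$, no derangement is admissible and so $\Delta_k = \emptyset$.

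For parts (ii) and (iii), I would read off the support of $X$ from the admissible cycle in each of the four (data, root-of-unity) pairs. Thus in (ii) with $a_1 = \xi_1,\ a_2 = \xi_2$: $\xi_k = \xi_1$ admits only $\sigma_+$, giving nonzero positions $(1,2), (2,3), (3,1)$; and $\xi_k = \xi_2$ admits only $\sigma_-$, giving $(1,3), (2,1), (3,2)$. Case (iii) follows identically with the two cycles swapped. In each instance the three off-diagonal entries are free parameters, forced to lie in $\C^\times$ by invertibility, and sufficiency is immediate by substitution. I do not anticipate a substantive obstacle; the only place requiring care is the bookkeeping of the four cycle/root-of-unity pairings.
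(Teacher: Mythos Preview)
Your proof is correct and follows essentially the same approach as the paper: both reduce the equation $XA-\xi_k AX=0$ to the entrywise condition $x_{ij}(a_j-\xi_k a_i)=0$ and then determine which patterns of nonzero entries are compatible with invertibility. Your framing in terms of admissible derangements is a clean way to package what the paper does by direct case analysis on which of $a_1,a_2$ equal $\xi_k$ or $\xi_k^2$; the only implicit step you rely on (that each row admits at most one nonzero column, since the $a_j$ are pairwise distinct) is immediate and matches the paper's reasoning.
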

\begin{proof}
Let $X \in \GL_3(\C)$, $k\in\{1,2\}$, and
 \begin{equation*}
 M=XA-\xi_k AX=\begin{pmatrix}
a_1(1-\xi_k)x_{11} & (a_2-\xi_k a_1)x_{12} & (1-\xi_k a_1) x_{13}\\
(a_1-\xi_k a_2)x_{21} &a_2(1-\xi_k)  x_{22}& (1-\xi_k a_2) x_{23}\\
(a_1-\xi_k) x_{31}&(a_2-\xi_k)  x_{32}& (1-\xi_k)x_{33} 
\end{pmatrix}.
 \end{equation*}
Since $a_1a_2\neq 0$ and $1-\xi_k\neq 0$, the solution to $m_{11}=m_{22}=m_{33}=0$ is $x_{11}=x_{22}=x_{33}=0$.

To prove part $(i)$, suppose first that $a_1\neq \xi_k$ and $a_2\neq \xi_k$. The solution to $m_{3,i}=0$, for $i=1,2$, is $x_{3,i}=0$. Hence, the third row of the matrix $X$ is the zero vector, which implies that $X\notin \GL_3(\C)$. Suppose next that $a_1=\xi_k$ and $a_2\neq \xi_k^2$. In this case, the solution to $m_{13}=m_{23}=0$ is $x_{13}=x_{23}=0$, and hence $X\notin \GL_3(\C)$. We get the same result in the case $a_2=\xi_k$ and $a_1\neq \xi_k^2$. Therefore, if $\{a_1,a_2\}\neq \{\xi_1,\xi_2\}$, then $\Delta_k=\emptyset$. This proves part $(i)$.

We now prove $(ii)$ and $(iii)$ simultaneously. If $a_1=\xi_k$ and $a_2=\xi_k^2$, then we have $m_{12}=m_{23}=m_{31}=0$ for any values of $x_{12},x_{23},$ and $x_{31}$. Moreover, in this case, the solution to $m_{13}=m_{21}=m_{32}=0$ is $x_{13}=x_{21}=x_{32}=0$. On the other hand, if $a_1=\xi_k^2$ and $a_2=\xi_k$, then we have $m_{13}=m_{21}=m_{32}=0$ for any values of $x_{13},x_{21},$ and $x_{32}$, and the solution to $m_{12}=m_{23}=m_{31}=0$ is $x_{12}=x_{23}=x_{31}=0$. This proves parts $(ii)$ and $(iii)$. 
\end{proof}
\begin{proposition} \label{ups}
Let $\vp$ be an $L$-parameter of $\GL_3(F)$ corresponding to an irreducible, unitary principal series representation. The component group of $[\vp]$ is 
\begin{equation*}
 \mathcal{S}_{[\vp]}\cong 1~~\text{ or }~~\Z/3\Z.
\end{equation*}
\end{proposition}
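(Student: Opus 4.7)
The plan is to compute $Z_{\PGL_3(\C)}(\mathrm{Im}[\vp])$ directly via the matrix lemmas just proved, and then pass to $\pi_0$. Since $\vp$ corresponds to a unitary principal series, its monodromy is trivial and we may write $\vp=(\chi_1\circ\Art)\oplus(\chi_2\circ\Art)\oplus(\chi_3\circ\Art)$ for unitary characters $\chi_i$ of $F^\times$. The image of $[\vp]$ then lies in the projective diagonal torus $T\subset\PGL_3(\C)$, so $T$ is contained in $Z_{\PGL_3(\C)}(\mathrm{Im}[\vp])$ and, in particular, in its identity component. Because $Z(\PGL_3(\C))$ is trivial, I identify $\mathcal{S}_{[\vp]}$ with $\pi_0(Z_{\PGL_3(\C)}(\mathrm{Im}[\vp]))$.

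I first dispatch the two degenerate sub-cases. If the three characters agree, $[\vp]$ is trivial, its centralizer equals the connected group $\PGL_3(\C)$, and $\mathcal{S}_{[\vp]}=1$. If exactly two agree, say $\chi_1=\chi_2\neq\chi_3$, the image of $\vp$ in $\GL_3(\C)$ lies in the center of the Levi $\GL_2(\C)\times\GL_1(\C)$, so the $\GL_3(\C)$-centralizer of the image is that Levi. A brief variant of Lemma \ref{lemma-prin-2} with two coincident eigenvalues $a_1=a_2=a\neq b=a_3$ shows that $BA=\xi AB$ for $A=\diag(a,a,b)$ with $\xi\in\mu_3\setminus\{1\}$ has no invertible solution. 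Hence $Z_{\PGL_3(\C)}(\mathrm{Im}[\vp])=[\GL_2(\C)\times\GL_1(\C)]$, which is connected, and $\mathcal{S}_{[\vp]}=1$.

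In the remaining case, $\chi_1,\chi_2,\chi_3$ are pairwise distinct. Each ratio $\chi_i\chi_j^{-1}$ is then a nontrivial character of $F^\times$, and a standard density argument (the kernels are proper closed subgroups of $W_F$, whose union cannot cover $W_F$) yields $w_0\in W_F$ with $\chi_1(w_0),\chi_2(w_0),\chi_3(w_0)$ pairwise distinct. Choosing a representative $\diag(a_1,a_2,1)$ of $[\vp(w_0)]$, Lemmas \ref{lem-centralizers}, \ref{lemma-prin-1}, and \ref{lemma-prin-2} give
\[
Z_{\PGL_3(\C)}([\vp(w_0)])=\begin{cases}T & \text{if }\{a_1,a_2\}\neq\{\xi_1,\xi_2\},\\ T\sqcup T\tau\sqcup T\tau^2 & \text{if }\{a_1,a_2\}=\{\xi_1,\xi_2\},\end{cases}
\]
where $\tau$ denotes the class of any cyclic monomial matrix drawn from $\Delta_1$. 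Since $T\subseteq Z_{\PGL_3(\C)}(\mathrm{Im}[\vp])\subseteq Z_{\PGL_3(\C)}([\vp(w_0)])$ and $T$ is the identity component of the right-hand group, the centralizer equals either $T$ or the full three-coset group, whence $\mathcal{S}_{[\vp]}\cong 1$ or $\Z/3\Z$.

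The only substantive calculation is the repeated-eigenvalue variant of Lemma \ref{lemma-prin-2} used in the two-equal-characters sub-case; everything else is bookkeeping around the two lemmas already available. Both component groups are expected to occur, with $\Z/3\Z$ realized, for instance, when $(\chi_1,\chi_2,\chi_3)=(\chi,\chi\mu,\chi\mu^2)$ for a unitary character $\chi$ of $F^\times$ and a nontrivial cubic character $\mu$ of $F^\times$.
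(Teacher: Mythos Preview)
Your overall strategy mirrors the paper's, but the ``standard density argument'' in the pairwise-distinct case is not valid: three proper closed subgroups of $F^\times$ \emph{can} cover it. Take $p$ odd, $\chi_1=\mathbbm{1}$, $\chi_2=\alpha$, $\chi_3=\beta$ with $\alpha,\beta$ distinct nontrivial quadratic characters of $F^\times$; then $\chi_1\chi_2^{-1},\chi_1\chi_3^{-1},\chi_2\chi_3^{-1}$ are precisely the three nontrivial characters of $F^\times/(F^\times)^2\cong(\Z/2\Z)^2$, and every $x\in F^\times$ lies in at least one of their kernels. So no $w_0$ with $\chi_1(w_0),\chi_2(w_0),\chi_3(w_0)$ pairwise distinct exists, and the containment $Z_{\PGL_3(\C)}(\mathrm{Im}[\vp])\subseteq Z_{\PGL_3(\C)}([\vp(w_0)])$ that drives the rest of your argument is unavailable. (The paper's own proof has essentially the same lacuna: it asserts without justification that ``a typical element'' of $\mathrm{Im}[\vp]$ has pairwise distinct diagonal entries.)

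The repair is short and keeps your architecture intact. Pairwise distinctness of the $\chi_i$ still guarantees, for each pair $(i,j)$, some $w$ with $\chi_i(w)\neq\chi_j(w)$; these three witnesses already force $Z_{\GL_3(\C)}(\mathrm{Im}\vp)=\widetilde T$, the diagonal torus. Now if $[g]\in Z_{\PGL_3(\C)}(\mathrm{Im}[\vp])$ then $g\vp(w)g^{-1}\in\C^\times\vp(w)\subset\widetilde T$ for every $w$, so $g(\mathrm{Im}\vp)g^{-1}\subset\widetilde T$ and hence $\widetilde T\subseteq Z_{\GL_3(\C)}\bigl(g(\mathrm{Im}\vp)g^{-1}\bigr)=g\widetilde Tg^{-1}$, i.e.\ $g\in N_{\GL_3(\C)}(\widetilde T)$. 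Thus $Z_{\PGL_3(\C)}(\mathrm{Im}[\vp])\subseteq T\rtimes S_3$, and a Weyl coset represented by a permutation $\sigma$ lies in the centralizer iff there is a character $\mu:F^\times\to\mu_3$ with $\chi_{\sigma^{-1}(i)}=\mu\chi_i$ for all $i$. A transposition fixes some index, forcing $\mu=\mathbbm{1}$ and hence a repeated character; so only $3$-cycles can occur and $\mathcal S_{[\vp]}\in\{1,\Z/3\Z\}$. Incidentally, your explicit handling of the equal-character sub-cases is more careful than the paper's, which silently assumes the $\chi_i$ are pairwise distinct even though irreducibility does not require it.
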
 
\begin{proof}
Recall from \eqref{Langlands parameter 1} that the irreducible, unitary principal series representations of $\GL_3(F)$ correspond to $L$-parameters of the form 
\begin{equation*}
 \left((\chi_1\circ \Art)\oplus (\chi_2\circ \Art)\oplus (\chi_3\circ \Art),\left(\begin{smallmatrix} 0 & 0 & 0\\ 0&0 & 0\\ 0&0 & 0\end{smallmatrix}\right)\right),
\end{equation*}
where $\chi_i, i=1,2,3$, are unitary characters of $F^\times$ and $\chi_i\chi_j^{-1}\neq |\cdot|$ for all $1\leq i,j\leq 3$. 

Suppose that $\vp_i:W_F\times \SL_2(\C)\to \GL_1(\C)$, $i=1,2,3$, are $L$-parameters for $\GL_1(F)$. It is easy to see that $\vp_i$ must be trivial on $\SL_2(\C)$. If $\vp_i(w)=\chi_i\circ \Art (w)$ for every $w\in W_F$, the $L$-parameter (\ref{Langlands parameter 1}) can be written as
\begin{align*}
\vp: W_F &\longrightarrow \GL_1(\C) \times \GL_1(\C) \times \GL_1(\C) \longrightarrow \GL_3(\C) \\
{\quad\,\, w} &\longmapsto (\varphi_1(w), \varphi_2(w), \varphi_3(w)) \longmapsto
\left(\begin{smallmatrix}
\varphi_1(w) & 0 & 0 \\
0 & \varphi_2(w) & 0 \\
0 & 0 & \varphi_3(w)
\end{smallmatrix}\right)
\end{align*}
where the $\vp_i$ are pairwise distinct for all $1\leq i\leq 3$.

Projecting $\mathrm{Im}(\vp)$ to $\PGL_3(\C)$, it is clear that 
\begin{equation*}
 \mathrm{Im}([\vp])= \left\{ \left[ \begin{pmatrix}
\frac{\varphi_1(w)}{\varphi_3(w)} & 0 & 0 \\
0 & \frac{\varphi_2(w)}{\varphi_3(w)} & 0 \\
0 & 0 & 1
\end{pmatrix}\right] \in \PGL_3(\C) : w\in W_F \right\}
\end{equation*}
where $[\vp]= {pr} \circ \vp$. Thus a typical element of $ \mathrm{Im}([\vp])$ has the form $[A]=\left[\left(\begin{smallmatrix}
a_{1} & 0 & 0 \\
 0 &a_2 & 0\\
 0 & 0 &1 \\
\end{smallmatrix}\right)\right] \in \PGL_3(\C)$ for some $a_1,a_2\in \mathbb{C}^\times$, where $a_1,a_2,1$ are pairwise distinct. By Lemma \ref{lem-centralizers}, $[X]\in Z_{\PGL_3(\C)}(\mathrm{Im}[\vp])$ if and only if it satisfies the equation $AX-\xi XA=0$, where $\xi$ is a third root of unity. 
Thus, $Z_{\PGL_3(\C)}(\mathrm{Im}[\vp])=\Delta_{0}\cup \Delta_{1} \cup \Delta_{2}$, where $\Delta_{k}$, $0\leq k\leq 2$ denotes the image in $\PGL_3(\C)$ of the set of all solutions $X\in \GL_3(\C)$ to the equation $AX-e^{\frac{2k\pi i}{3} } XA=0$.
The sets $\Delta_k$ depend on the values of $a_i$ for $i=1$ or $2$ as follows:
\begin{enumerate}[\text{Case} (i)]
 \item If $a_1^3\neq 1$ or $a_2^3\neq 1$, then by Lemma~\ref{lemma-prin-2}(i), we have $\Delta_{l}=\emptyset$ for $l=1,2$. Also, by Lemma~\ref{lemma-prin-1}, we have 
 \begin{equation*}
 \Delta_{0}= \left\{\left[\begin{pmatrix}x_{11}& 0&0 \\ 0& x_{22}&0\\0&0&x_{33}\end{pmatrix}\right]:x_{ii}\in\C^\times\right\}. 
\end{equation*}
 Hence, 
\[
\mathcal{S}_{[\vp]}=Z_{\PGL_3(\C)}(\mathrm{Im}([\vp]))/Z_{\PGL_3(\C)}(\mathrm{Im}([\vp]))^{\circ}= \Delta_{0}/ \Delta_{0}=1.
\]
 \item If $a_i^3= 1$ for $i=1$ and $2$, then since $a_1,a_2$, and 1 are pairwise distinct, we must have either $a_1=\xi_1 a_2$ or $a_1=\xi_2 a_2$, where $\xi_k=e^{\frac{2k\pi i}{3}}$. By Lemma \ref{lemma-prin-1} we have
 \begin{equation*}
 \Delta_{0}= \left\{\left[\begin{pmatrix}x_{11}& 0&0 \\ 0& x_{22}&0\\0&0&x_{33}\end{pmatrix}\right]:x_{ii}\in\C^\times\right\},
\end{equation*} 
and by Lemma~\ref{lemma-prin-2}(ii) and (iii) we have
\begin{equation*}
 \Delta_{1} \cup \Delta_2=\left\{\left[\begin{pmatrix}0&x_{12}& 0 \\ 0&0&x_{23}\\x_{31}&0&0\end{pmatrix}\right]:x_{ij}\in\C^\times \right\}\cup \left\{\left[\begin{pmatrix}0& 0&x_{13} \\ x_{21}&0&0\\0&x_{32}&0\end{pmatrix}\right]:x_{ij}\in\C^\times\right\}. 
\end{equation*}
Therefore, \[\mathcal{S}_{[\vp]}=Z_{\PGL_3(\C)}(\mathrm{Im}([\vp]))/Z_{\PGL_3(\C)}(\mathrm{Im}([\vp]))^{\circ}\cong \Z/3\Z. \qedhere \]
\end{enumerate}
\end{proof}
\subsection{Non-Supercuspidal Discrete $L$-parameter of $\SL_3(F)$} \label{subsec-st}
\begin{lemma} \label{lemma-st}
Let $S$ be the set defined by
 \begin{equation*} \label{matrix-A}
S=\left\{ \begin{pmatrix}
a^{2} & ab & b^{2} \\
2ac & ad+bc & 2bd \\
c^{2} & cd & d^{2}
\end{pmatrix} \in \GL_3(\C): ad-bc=1\right\}.
\end{equation*}
For $k\in \{0,1,2\}$, let $\xi_k=e^{\frac{2k\pi i}{3}}$ and 
\begin{equation*}
 \Delta_k=\left\{X\in \GL_3(\C): XA-\xi_k AX=0 \text{ for all } A \in S \right\}.
\end{equation*}
We have the following:
\begin{enumerate}[(i)] 
\item $\Delta_1=\Delta_2=\emptyset$.
 \item $\Delta_0=\left\{xI_3:x\in\C^\times\right\}$.
\end{enumerate}
\end{lemma}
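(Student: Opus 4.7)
The plan is to first identify the set $S$ as the image of the symmetric-square representation $\mathrm{Sym}^2 : \SL_2(\C) \to \GL_3(\C)$, realized on the space of homogeneous degree-two polynomials in $(x,y)$ with the ordered basis $(x^2, xy, y^2)$. A direct computation matches the matrix of $\mathrm{Sym}^2\begin{pmatrix} a & b \\ c & d \end{pmatrix}$ in this basis to the defining expression of $S$. In particular, $S$ is a subgroup of $\GL_3(\C)$ and $\mathrm{Sym}^2$ is an irreducible representation of $\SL_2(\C)$; these two facts drive both halves of the lemma.

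For part (i), the decisive observation is that $I_3 \in S$, corresponding to $(a,b,c,d) = (1,0,0,1)$. Specializing the defining relation $XA - \xi_k AX = 0$ to $A = I_3$ yields $(1 - \xi_k) X = 0$. Since $\xi_k \neq 1$ for $k = 1, 2$, this forces $X = 0$, which contradicts $X \in \GL_3(\C)$. Hence $\Delta_1 = \Delta_2 = \emptyset$.

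For part (ii), the cleanest route is to invoke Schur's lemma: since $\mathrm{Sym}^2$ is irreducible, any $X \in \GL_3(\C)$ commuting with every element of $S$ must be a scalar, and the reverse inclusion $\{xI_3 : x \in \C^\times\} \subseteq \Delta_0$ is automatic. A self-contained, matrix-theoretic alternative in the spirit of Lemmas \ref{lemma-prin-1} and \ref{lemma-prin-2} proceeds in two steps. First, choose $A_1 = \mathrm{Sym}^2(\diag(t,t^{-1})) = \diag(t^2, 1, t^{-2}) \in S$ with $t \in \C^\times$ satisfying $t^2 \neq 1$; its diagonal entries are pairwise distinct, so Lemma \ref{lemma-prin-1} forces any $X \in \Delta_0$ to be diagonal. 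Second, take $A_2$ to be the image under $\mathrm{Sym}^2$ of a non-trivial unipotent element of $\SL_2(\C)$; the commutation relation $XA_2 = A_2 X$ with $X$ diagonal then forces the three diagonal entries of $X$ to coincide. I do not anticipate a significant obstacle; the whole argument reduces to the initial identification $S = \mathrm{Sym}^2(\SL_2(\C))$, after which both parts follow immediately.
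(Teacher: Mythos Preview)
Your proof is correct and, in part (i), strictly simpler than the paper's. The paper argues part (i) by restricting to the diagonal subgroup $S_1=\{\diag(a^2,1,a^{-2})\}\subset S$, conjugating to put it in the form of Lemma~\ref{lemma-prin-2}, and observing that for generic $a$ the equation $XB-\xi_k BX=0$ has no invertible solution. Your observation that $I_3\in S$ already kills $\Delta_1$ and $\Delta_2$ in one line is cleaner and avoids invoking Lemma~\ref{lemma-prin-2} altogether; the paper's detour through $S_1$ is unnecessary for this half.

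For part (ii), the paper proceeds by the explicit two-step computation you describe as your alternative: first use the diagonal matrices $\diag(a^2,1,a^{-2})$ to force $X$ diagonal, then commute with a general element of $S$ to force the diagonal entries to coincide. Your primary route via Schur's lemma is more conceptual and is the natural argument once one has identified $S=\mathrm{Sym}^2(\SL_2(\C))$ with irreducibility known; it has the advantage of making the result independent of any matrix bookkeeping, at the cost of importing a representation-theoretic fact the paper otherwise avoids. The paper's hands-on computation keeps everything self-contained at the level of $3\times 3$ matrices, consistent with the style of the surrounding lemmas.
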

\begin{proof}
Consider a subset $S_1\subset S$ defined by 
\begin{equation*}
 S_1= \left\{\begin{pmatrix}
a^{2} & 0 & 0 \\
0 & 1 & 0 \\
0 & 0 & a^{-2}
\end{pmatrix} : a\in \C^{\times}\right\}.
\end{equation*}
Note that $B=\left(\begin{smallmatrix}
a^{2} & & \\
 & 1 & \\
 & & a^{-2}
\end{smallmatrix}\right) \in S_1$ is similar to the matrix $C=\left(\begin{smallmatrix}
a^{2} & & \\
 & a^{-2} & \\
 & & 1
\end{smallmatrix}\right)$. It follows from Lemma~\ref{lemma-prin-2}(i) that the equation $XC-\xi_kCX=0$ with $k\in\{1,2\}$ does not have a solution $X\in \GL_3(\C)$ if $a$ is not a third root of unity. We conclude that the set
\begin{equation*}
\left\{X\in \GL_3(\C): XB-\xi_k BX=0 \text{ for all } B \in S_1 \right\}
\end{equation*}
is empty. This proves part $(i)$ since $S_1\subset S$. To compute $\Delta_0$, consider first
\begin{equation*}
XB-BX=\begin{pmatrix}
0 & (1- a^2)x_{12} & (\frac{1}{a^{2}}-a^2)x_{13} \\
(a^{2}-1)x_{21} & 0 & (\frac{1}{a^{2}}-1)x_{23} \\
(a^{2}-\frac{1}{a^2})x_{31} & (1-\frac{1}{a^2}) x_{32}& 0 \end{pmatrix}.
\end{equation*}
Since the matrix above is zero for any choice of $a\in \C^{\times}$ if and only if $X$ is diagonal, we deduce that every element of $\Delta_0$ is diagonal. Finally, for $X\in \GL_3(\C)$ diagonal and $A\in S$ we have
 \begin{equation*}
 XA-AX=
\begin{pmatrix}
 0 & ab(x_{11}-x_{22}) & b^{2}(x_{11}-x_{33}) \\
 2ac(x_{22}-x_{11}) & 0 & 2bd(x_{22}-x_{33}) \\
 c^{2}(x_{33}-x_{11}) & cd(x_{33}-x_{22}) & 0
 \end{pmatrix},
 \end{equation*}
which is zero for any choice of $a,b,c,d\in\C$ if and only if $x_{11}=x_{22}=x_{33}$. This proves part $(ii)$.
\end{proof}
\begin{proposition} \label{pro-st}
 The component group of the non-supercuspidal, discrete $L$-parameter of $\SL_3(F)$ is trivial.
\end{proposition}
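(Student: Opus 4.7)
The plan is to unpack the $L$-parameter corresponding to the twisted Steinberg of $\GL_3(F)$ (given in Weil--Deligne form \eqref{Langlands parameter 2}) into a homomorphism $\varphi : W_F \times \SL_2(\C) \to \GL_3(\C)$ via the correspondence in Section~2, and then show that the image of $\SL_2(\C)$ alone already has trivial centralizer in $\PGL_3(\C)$ thanks to Lemma~\ref{lemma-st}. Since the centralizer of $\mathrm{Im}([\varphi])$ sits inside the centralizer of any of its subsets, this will immediately force $\mathcal{S}_{[\vp]} = 1$.

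First, I would compute $\varphi|_{\SL_2(\C)}$. Using $e^N = \varphi\bigl(1, \left(\begin{smallmatrix}1 & 1\\ 0 & 1\end{smallmatrix}\right)\bigr)$ for the regular nilpotent $N = \left(\begin{smallmatrix}0 & 1 & 0\\ 0 & 0 & 1\\ 0 & 0 & 0\end{smallmatrix}\right)$ appearing in \eqref{Langlands parameter 2}, together with the fact that $\varphi|_{\SL_2(\C)}$ is an algebraic group homomorphism, one sees that $\varphi|_{\SL_2(\C)}$ is the principal $\SL_2$-embedding, i.e.\ the symmetric square representation
\[
\begin{pmatrix} a & b \\ c & d \end{pmatrix} \longmapsto
\begin{pmatrix}
a^{2} & ab & b^{2} \\
2ac & ad+bc & 2bd \\
c^{2} & cd & d^{2}
\end{pmatrix}.
\]
In particular, the image of $\varphi|_{\SL_2(\C)}$ is exactly the set $S$ appearing in Lemma~\ref{lemma-st}.

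Next, I would apply Lemma~\ref{lemma-st}. By Lemma~\ref{lem-centralizers}, a class $[X] \in \PGL_3(\C)$ lies in $Z_{\PGL_3(\C)}([S])$ if and only if $XA - \xi AX = 0$ for all $A \in S$ for some third root of unity $\xi$, so that
\[
Z_{\PGL_3(\C)}([S]) \;=\; [\Delta_0] \cup [\Delta_1] \cup [\Delta_2].
\]
Lemma~\ref{lemma-st} gives $\Delta_1 = \Delta_2 = \emptyset$ and $\Delta_0 = \{x I_3 : x \in \C^\times\}$, whose image in $\PGL_3(\C)$ is the single class $[I_3]$. Hence $Z_{\PGL_3(\C)}([S])$ is already trivial.

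Finally, since $[S] \subseteq \mathrm{Im}([\varphi])$, we obtain
\[
Z_{\PGL_3(\C)}(\mathrm{Im}([\varphi])) \;\subseteq\; Z_{\PGL_3(\C)}([S]) \;=\; \{[I_3]\},
\]
and the component group $\mathcal{S}_{[\vp]} = Z_{\PGL_3(\C)}(\mathrm{Im}([\varphi]))/Z_{\PGL_3(\C)}(\mathrm{Im}([\varphi]))^\circ$ is trivial. The main conceptual step is identifying $\varphi|_{\SL_2(\C)}$ with the principal embedding so that Lemma~\ref{lemma-st} applies directly; after that, the proof is essentially immediate and the contribution of the Weil group part of $\varphi$ (which depends on $\chi$) plays no role, because the $\SL_2(\C)$ image alone is already rigid enough to kill the centralizer.
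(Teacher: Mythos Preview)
Your proposal is correct and follows essentially the same approach as the paper: identify $\varphi|_{\SL_2(\C)}$ with the symmetric square (principal) embedding so that its image is the set $S$ of Lemma~\ref{lemma-st}, then use that lemma together with Lemma~\ref{lem-centralizers} to see that already $Z_{\PGL_3(\C)}([S])=\{[I_3]\}$, forcing $\mathcal{S}_{[\vp]}=1$. The only cosmetic difference is that the paper observes $\mathrm{Im}([\varphi])=[S]$ outright (since the $W_F$-contribution is scalar and vanishes in $\PGL_3(\C)$), whereas you argue via the inclusion $[S]\subseteq\mathrm{Im}([\varphi])$; both are fine.
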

\begin{proof}
Recall from \eqref{Langlands parameter 2} that the $L$-parameter corresponding to the twisted Steinberg representation $St_3(\chi)$ is
\begin{equation*}
 \left((\chi\circ \Art)\oplus (\mathbbm{1}\circ\Art)\oplus (\chi^{-1}\circ \Art),\left(\begin{smallmatrix} 0 & 1 & 0\\ 0&0 & 1\\ 0&0 & 0\end{smallmatrix}\right)\right).
\end{equation*}
As an $L$-homomorphism, $\vp$ is trivial on $W_F$ and is the unique, irreducible 3-dimensional representation on $\SL_2(\mathbb{C})$. Explicitly, 
\begin{align*}
 \nonumber \varphi: W_F\times \SL_2(\C)&\to \GL_3(\C) \\
 \left(w,\begin{pmatrix}a & b\\ c& d\end{pmatrix}\right) &\mapsto \begin{pmatrix}
a^{2} & ab & b^{2} \\
2ac & ad+bc & 2bd \\
c^{2} & cd & d^{2}
\end{pmatrix}.
\end{align*}
Projecting $\mathrm{Im}(\vp)$ to $\PGL_3(\C)$, we have an $L$-parameter $[\vp]$ with \begin{equation*}
 \mathrm{Im}([\vp])=\left\{\left[ \begin{pmatrix}
a^{2} & ab & b^{2} \\
2ac & ad+bc & 2bd \\
c^{2} & cd & d^{2}
\end{pmatrix}\right]\in \PGL_3(\C):ad-bc=1\right\}.
\end{equation*}
Since $
 \left\{xI_3:x\in\C^\times\right\} \subset Z(\GL_3(\C))$, it follows from Lemma~\ref{lemma-st} that $Z_{\PGL_3(\C)}(\text{Im}([\varphi]))= \{1\}$. Hence $S_{[\varphi]}=1$.
\end{proof}
\subsection{Non-Discrete Tempered $L$-parameters of $\SL_3(F)$} \label{subsec-nondisc}
 Let $\sigma$ be a discrete series representation of $\GL_2(F)$ and let $\vp_{\sigma}:W_F\times \SL_2(\C)\to\GL_2(\C)$ denote the corresponding $L$-parameter. 
Let $\chi$ be a unitary character of $F^\times$ and let $\vp_\chi:W_F\times\SL_2(\C)\to \GL_1(\C)$ denote the corresponding $L$-parameter. The $L$-parameter corresponding to $\displaystyle \Ind_{P_{2,1}}^{\GL_3(F)}(\sigma \otimes \chi)$ is 
\begin{align*}
 \nonumber \vp:W_F\times \SL_2(\C)&\to \GL_2(\C)\times \GL_1(\C) \subset \GL_3(\C) \\
 \left(w,\begin{pmatrix}a & b\\ c& d\end{pmatrix}\right) &\mapsto \begin{pmatrix} \vp_{\sigma}\left(w,\left(\begin{smallmatrix}a & b\\ c& d\end{smallmatrix}\right)\right) & 0\\0&\vp_{\chi} \left(w,\left(\begin{smallmatrix}a & b\\ c& d\end{smallmatrix}\right)\right) \end{pmatrix}.
\end{align*}
The $L$-parameter $\vp_{\sigma}$ could be dihedral or exceptional, or the representation $\sigma$ could be the Steinberg representation $St_2$. We study each case separately. 

\subsubsection{Dihedral Supercuspidal $L$-parameter $\vp_{\sigma}$}\label{dihedralsubsection}
\begin{lemma} \label{lem-dihedral-1}
Fix $a\in\C^\times$ and let
 \begin{equation*} 
 S=\left\{\begin{pmatrix}
0 & 1 & 0 \\
a & 0 & 0 \\
0 & 0 & c
\end{pmatrix}: c \in \C^\times\right\}.
\end{equation*}
The set of matrices $X\in\GL_3(\C)$ which satisfy the equation $XA-\xi AX=0$ for all $A\in S,$ where $\xi$ is a third root of unity, is given by
\begin{equation*}
 \left\{ \begin{pmatrix}
x_{11} & x_{12} & 0 \\
ax_{12} & x_{11} & 0 \\
0 & 0 & x_{33} \\
\end{pmatrix} : x_{ij}\in\C\right\}\cap \GL_3(\C). 
\end{equation*}
\end{lemma}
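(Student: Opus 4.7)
The plan is to proceed by direct matrix computation, in the style of Lemmas~\ref{lemma-prin-1} and~\ref{lemma-prin-2}. Let $X=(x_{ij})\in\GL_3(\C)$ and let $A_c=\left(\begin{smallmatrix} 0 & 1 & 0 \\ a & 0 & 0 \\ 0 & 0 & c \end{smallmatrix}\right)\in S$. For a third root of unity $\xi$, set $M_{\xi,c}=XA_c-\xi A_c X$. By Lemma~\ref{lem-centralizers}, the hypothesis amounts to: for every $c\in\C^\times$, there is some third root of unity $\xi_c$ with $M_{\xi_c,c}=0$.

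The crucial step is to show that $\xi_c=1$ is forced for every $c$. Writing out the nine entries of $M_{\xi,c}$, the top-left $2\times 2$ block does not involve $c$: the $(1,1)$ and $(2,2)$ entries give the pair $ax_{12}=\xi x_{21}$ and $x_{21}=\xi a x_{12}$, hence $(1-\xi^2)\,ax_{12}=0$; the $(1,2)$ and $(2,1)$ entries (the latter divided by $a$) give $x_{11}=\xi x_{22}$ and $x_{22}=\xi x_{11}$, hence $(1-\xi^2)x_{11}=0$. Since $\xi^3=\xi^2=1$ together force $\xi=1$, any choice of $\xi\ne 1$ already imposes $x_{11}=x_{22}=x_{12}=x_{21}=0$. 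Meanwhile the $(3,3)$ entry is $c(1-\xi)x_{33}$, so $\xi\ne 1$ also forces $x_{33}=0$. Consequently, if $\xi_c\ne 1$ for even a single value of $c$, the first two rows of $X$ are both supported only in column~$3$; they are therefore linearly dependent, so $\det X=0$, contradicting $X\in\GL_3(\C)$. Hence $\xi_c=1$ for every $c\in\C^\times$.

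With $\xi=1$ uniformly for all $c$, the remaining off-block entries of $M_{1,c}$ read $cx_{13}=x_{23}$, $cx_{23}=ax_{13}$, $ax_{32}=cx_{31}$, and $x_{31}=cx_{32}$. Each of these must hold for every $c\in\C^\times$, which forces $x_{13}=x_{23}=x_{31}=x_{32}=0$. The remaining non-trivial relations collapse to $x_{11}=x_{22}$ and $x_{21}=ax_{12}$, so $X$ has exactly the shape claimed. Conversely, a direct check shows that every matrix of that shape commutes with $A_c$ for every $c\in\C^\times$, so after intersecting with $\GL_3(\C)$ the containment is an equality.

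The main obstacle is handling the quantifier ``for each $c$ there exists $\xi_c$,'' since a priori different $c$'s could use different roots of unity. The decisive observation is that any single occurrence of $\xi_c\ne 1$ simultaneously annihilates the entire upper-left $2\times 2$ block of $X$ together with the $(3,3)$ entry, which alone is incompatible with invertibility of $X$. After this is ruled out, the problem reduces to the ordinary commutator computation $XA_c=A_cX$, in the spirit of Lemmas~\ref{lemma-prin-1} and~\ref{lemma-prin-2}.
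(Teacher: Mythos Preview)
Your argument is correct and follows essentially the same computation as the paper's proof: both show that a primitive third root $\xi\neq 1$ forces the upper-left $2\times 2$ block of $X$ to vanish (hence $X\notin\GL_3(\C)$), and then solve the commutator equations $XA_c=A_cX$ to obtain the stated shape. The one genuine addition in your write-up is that you explicitly treat the possibility that $\xi$ might vary with $c$, and rule it out by observing that a single instance of $\xi_c\neq 1$ already makes rows $1$ and $2$ of $X$ proportional; the paper's proof handles the two values of $\xi$ as separate global cases and leaves this quantifier issue implicit.
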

\begin{proof}
 Let $X \in \GL_3(\C)$, $c\in \C^\times$, and
 \begin{equation*}
 M=X\begin{pmatrix}
0 & 1 & 0 \\
a & 0 & 0 \\
0 & 0 & c
\end{pmatrix}-\xi \begin{pmatrix}
0 & 1 & 0 \\
a & 0 & 0 \\
0 & 0 & c
\end{pmatrix}X=\begin{pmatrix} ax_{12}-\xi x_{21} & x_{11}-\xi x_{22} & cx_{13}-\xi x_{23} \\ax_{22}-a\xi x_{11} & x_{21}-a\xi x_{12} & cx_{23}- a\xi x_{13} \\
ax_{32}-c\xi x_{31}& x_{31}-c\xi x_{32} & c(1-\xi)x_{33} 
\end{pmatrix}.
 \end{equation*}
First, consider $\xi=1$. The solution to $m_{11}=m_{22}=0$ is given by $x_{21}=ax_{12}$, and the solution to $m_{12}=m_{21}=0$ is $x_{11}=x_{22}$. Also, whenever $c\neq a^2$, we must have $x_{31}=x_{32}=0$ in order for the remaining entries of $M$ to vanish. We conclude that the solution to $M=0$ is given by
\begin{equation*}
 X=\begin{pmatrix}
x_{11} & x_{12} & 0 \\
ax_{12} & x_{11} & 0 \\
0 & 0 & x_{33} \\
\end{pmatrix}.
\end{equation*}
Next, consider $\xi\in \{e^{\frac{2}{3}\pi i},e^{\frac{4}{3}\pi i}\}$. The solution to $m_{11}=m_{22}=0$ is $x_{12}=x_{21}=0$, and the solution to $m_{12}=m_{21}=0$ is $x_{11}=x_{22}=0$. Therefore, $M=0$ has no solution $X\in \GL_3(\C)$. 
\end{proof}
\begin{proposition} \label{pro-dihedral}
 Let $\vp_{\sigma}$ be a dihedral supercuspidal $L$-parameter of $\GL_2(F)$ and $\chi$ be a unitary character of $F^\times$. Let $\vp$ be an $L$-parameter of $\GL_3(F)$ corresponding to the representation $\displaystyle \Ind_{P_{2,1}}^{\GL_3(F)}(\sigma \otimes \chi)$. The component group of $[\vp]$ is trivial.
\end{proposition}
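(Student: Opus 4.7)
The plan is to combine the block structure of $\vp$ with the explicit description of dihedral supercuspidal parameters from Proposition \ref{prop-supercuspidal} in order to compute $Z_{\PGL_3(\C)}(\mathrm{Im}([\vp]))$ directly and show it is connected. Since $\chi$ is a character and $\sigma$ is supercuspidal, both $\vp_\sigma$ and $\vp_\chi$ are trivial on $\SL_2(\C)$, so $\vp$ factors through $W_F$. By Proposition \ref{prop-supercuspidal}, the image of $\vp_\sigma$ in $\PGL_2(\C)$ is generated, up to conjugacy, by the classes of $\left(\begin{smallmatrix} 0 & 1 \\ 1 & 0 \end{smallmatrix}\right)$ and $\left(\begin{smallmatrix} -1 & 0 \\ 0 & 1 \end{smallmatrix}\right)$. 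Choosing $w_1, w_2 \in W_F$ realizing these two classes, the values $\vp(w_1), \vp(w_2) \in \GL_3(\C)$ take, after a suitable choice of basis, the block-diagonal shapes
\begin{equation*}
\begin{pmatrix} 0 & 1 & 0 \\ a & 0 & 0 \\ 0 & 0 & c \end{pmatrix} \quad\text{and}\quad \begin{pmatrix} -b & 0 & 0 \\ 0 & b & 0 \\ 0 & 0 & c' \end{pmatrix},
\end{equation*}
with $a, b \in \C^\times$ fixed and $c, c'$ varying through the $\chi$-block as $w$ ranges over the relevant fibers in $W_F$.

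The first main step is to apply Lemma \ref{lem-dihedral-1} to the family of matrices of the first shape. By Lemma \ref{lem-centralizers}, any $[X]\in Z_{\PGL_3(\C)}(\mathrm{Im}([\vp]))$ is represented by $X \in \GL_3(\C)$ satisfying $XA = \xi AX$ for some cube root of unity $\xi$; Lemma \ref{lem-dihedral-1} then pins down $X$ to the shape
\begin{equation*}
X = \begin{pmatrix} x_{11} & x_{12} & 0 \\ a x_{12} & x_{11} & 0 \\ 0 & 0 & x_{33} \end{pmatrix}\in \GL_3(\C).
\end{equation*}

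The second main step is to impose the twisted commutation $XB = \xi' BX$ with the second generator. A short direct calculation of $XB - \xi' BX$ for $X$ of the above shape shows that a non-trivial cube root $\xi' \neq 1$ would force $x_{11}=0$ from the diagonal entries of the $2\times 2$ block, $x_{33}=0$ from the $(3,3)$ entry, and then $x_{12}=0$ from the off-diagonal entries (since $1+\xi' \neq 0$ for the non-trivial cube roots of unity), contradicting $X \in \GL_3(\C)$. Hence $\xi'=1$, and equality of $XB$ and $BX$ forces $x_{12}=0$, yielding $X=\mathrm{diag}(x_{11}, x_{11}, x_{33})$. Such diagonal matrices manifestly commute with every image generator, so the centralizer projects in $\PGL_3(\C)$ to a one-dimensional torus isomorphic to $\C^\times$, which is connected. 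Therefore $\mathcal{S}_{[\vp]}=1$.

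The chief obstacle is the careful bookkeeping of the cube-root ambiguity in Lemma \ref{lem-centralizers}: one must rule out both $\xi \neq 1$ and $\xi' \neq 1$ uniformly across the two classes of generators. The decisive input here is that the $\chi$-block supplies the extra constraint $x_{33}(1-\xi')=0$, which, combined with the rigid shape forced by Lemma \ref{lem-dihedral-1}, leaves no room for disconnected components.
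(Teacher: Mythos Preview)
Your proof is correct and follows essentially the same route as the paper: identify the two generators of $\mathrm{Im}([\vp])$ via Proposition~\ref{prop-supercuspidal}, apply Lemma~\ref{lem-dihedral-1} to the anti-diagonal generator to pin down the shape of $X$, then use the diagonal generator to force $x_{12}=0$. The only difference is cosmetic: for the diagonal generator $B$ the paper invokes Lemmas~\ref{lemma-prin-1} and~\ref{lemma-prin-2} to compute $Z_{\PGL_3(\C)}(B)$ in full and then intersects, whereas you carry out the commutation $XB=\xi'BX$ directly on the already-constrained $X$, which is slightly more economical but not a genuinely different argument.
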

\begin{proof}
By Proposition \ref{prop-supercuspidal}, $[\mathrm{Im}(\vp_\sigma)]\subset \PGL_2(\C)$ is generated by the matrices 
\begin{equation*}
\left[\begin{pmatrix}
0 & 1 \\
1 & 0\\
\end{pmatrix}\right] \quad \text{and} \quad 
\left[\begin{pmatrix}
-1 & 0 \\
0 & 1\\
\end{pmatrix} \right].
\end{equation*}
In this case, $\mathrm{Im}[\vp]$ is generated by the matrices 
\begin{equation*}
 A= \left[\begin{pmatrix}
0 & 1 & 0 \\
1 & 0 & 0\\
0 & 0 & c
\end{pmatrix}\right] \quad \text{and} \quad B=\left[\begin{pmatrix}
-1 & 0 & 0\\
0 & 1 & 0 \\
0 & 0 & c
\end{pmatrix} \right] 
\end{equation*}
in $\PGL_3(\C)$. 
Note that $c \neq 1$ as $c=\vp_\chi (w), w \in W_F$.
By Lemma \ref{lem-dihedral-1} with $a=1$, we have
\begin{equation*}
 Z_{\PGL_3(\C)}(A)= \left\{ [X]: X=\begin{pmatrix}
x_{11} & x_{12} & 0 \\
x_{12} & x_{11} & 0 \\
0 & 0 & x_{33} \\
\end{pmatrix} \in\GL_3(\C) \right\}.
\end{equation*}
Moreover, by Lemma~\ref{lemma-prin-1} and Lemma~\ref{lemma-prin-2}, we have 
 \begin{equation*}
 Z_{\PGL_3(\C)}(B)= \left\{\left[\begin{pmatrix}x_{11}& 0&0 \\ 0& x_{22}&0\\0&0&x_{33}\end{pmatrix}\right]:x_{ii}\in\C^\times \text{ for }i=1,2,3\right\}.
\end{equation*}
Therefore, the set $Z_{\PGL_3(\C)}(\text{Im}([\varphi]))= Z_{\PGL_3(\C)}(A)\cap Z_{\PGL_3(\C)}(B)$ is given by
\begin{equation*}
 Z_{\PGL_3(\C)}(\text{Im}([\varphi]))= \left\{\left[\begin{pmatrix}x_{11}& 0&0 \\ 0& x_{11}&0\\0&0&x_{22}\end{pmatrix}\right]:x_{ii}\in\C^\times \text{ for }i=1,2\right\}.
\end{equation*}
We conclude that $\mathcal{S}_{[\vp]}$ is trivial. 
\end{proof}
 
\subsubsection{Exceptional supercuspidal $L$-parameter $\vp_{\sigma}$} \label{exceptionalparametersection}

\begin{lemma} \label{lem-tetrahedral-2}
Let $c\in \C^\times$ satisfy $c\neq 1$, and let 
 \begin{equation*} 
 A= \begin{pmatrix}
1 & 1 & 0 \\
0 & 1 & 0 \\
0 & 0 & c
\end{pmatrix}\in \GL_3(\C).
\end{equation*}
The set of matrices $X\in\GL_3(\C)$ satisfying the equation $XA-\xi AX=0$ for some third root of unity $\xi \in \C$ is given by
\begin{equation*}
 \left\{ \begin{pmatrix}
x_{11} & x_{12} & 0 \\
0 & x_{11} & 0 \\
0 & 0 & x_{33} \\
\end{pmatrix} : x_{ii}\in\C^\times, x_{12} \in \C, \text{ for }1\leq i\leq 3 \right\}.
\end{equation*}
\end{lemma}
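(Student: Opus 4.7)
The plan is to follow the same direct linear-algebra template used in Lemmas~\ref{lemma-prin-1}, \ref{lemma-prin-2}, \ref{lemma-st}, and \ref{lem-dihedral-1}: write down the matrix $M = XA - \xi AX$ entry by entry, impose $M = 0$, and split by the value of the third root of unity $\xi \in \{1, \omega, \omega^2\}$, where $\omega = e^{2\pi i/3}$. Since $A$ is block-diagonal with a $2\times 2$ Jordan block $\left(\begin{smallmatrix} 1 & 1 \\ 0 & 1\end{smallmatrix}\right)$ in the upper-left corner and the scalar $c \neq 1$ in the $(3,3)$ position, most of the entries of $M$ are simple linear expressions in the $x_{ij}$ with coefficients involving $1$, $c$, and $\xi$.

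First I would handle the case $\xi = 1$, i.e.\ the ordinary commutant. The $(2,1)$-entry of $M$ is $(1-\xi)x_{21}$, which vanishes automatically here, so one reads off the constraints from the other entries: the $(1,1)$-entry gives $x_{21} = 0$, the $(1,2)$-entry gives $x_{11} = x_{22}$, the $(3,1)$ and $(3,2)$-entries give $(1-c)x_{31} = 0$ and $(1-c)x_{32} = 0$, and the $(2,3)$-entry gives $(c-1)x_{23} = 0$; using $c \neq 1$, these force $x_{31} = x_{32} = x_{23} = 0$. Finally the $(1,3)$-entry, combined with $x_{23} = 0$, yields $(c-1)x_{13} = 0$, hence $x_{13} = 0$. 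The entry $x_{12}$ remains unconstrained, and $x_{11}, x_{33}$ are free subject to $X \in \GL_3(\C)$. This reproduces exactly the form claimed in the lemma.

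Next I would rule out the primitive cases $\xi = \omega$ and $\xi = \omega^2$. Here $1 - \xi \neq 0$, so the $(2,1)$-entry $(1-\xi)x_{21}$ immediately forces $x_{21} = 0$; then the $(1,1)$-entry $x_{11} - \xi(x_{11} + x_{21})$ collapses to $(1-\xi)x_{11} = 0$, giving $x_{11} = 0$, and the $(2,2)$-entry gives $x_{22} = 0$ similarly. The $(1,2)$-entry then reduces to $(1-\xi)x_{12} = 0$, so $x_{12} = 0$. Thus the entire upper-left $2\times 2$ block of $X$ vanishes, which is incompatible with $\det X \neq 0$. Hence no $X \in \GL_3(\C)$ satisfies $XA = \xi AX$ for $\xi \in \{\omega, \omega^2\}$.

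Taking the union of the three cases, only $\xi = 1$ contributes, and the set of solutions is precisely the one described. The main (mild) obstacle is just bookkeeping: one must carefully use $c \neq 1$ at several places in the $\xi = 1$ computation to eliminate $x_{13}, x_{23}, x_{31}, x_{32}$, and one must verify that the same hypothesis $c \neq 1$ is not needed to rule out the primitive-root cases (it isn't---those are killed purely by the upper-left Jordan block, mirroring the failure mode seen in Lemma~\ref{lem-dihedral-1}).
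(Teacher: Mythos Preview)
Your proposal is correct and follows essentially the same approach as the paper: write out $M = XA - \xi AX$ entry by entry, show that when $\xi \neq 1$ the upper-left $2\times 2$ block of $X$ is forced to vanish (so no invertible solution), and when $\xi = 1$ use $c \neq 1$ to kill the off-block entries, leaving exactly the upper-triangular form claimed. Your organization is slightly more explicit about which hypothesis ($c\neq 1$ versus the Jordan block) is doing the work in each case, but the argument is the same.
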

\begin{proof}
 Let $X \in \GL_3(\C)$, and let 
\[M=XA-\xi AX=
\begin{pmatrix}
 (1-\xi) x_{11}-\xi x_{21} & x_{11}+(1-\xi) x_{12}-\xi x_{22} & (c-\xi)x_{13}-\xi x_{23} \\
 (1-\xi) x_{21} & x_{21}+(1-\xi) x_{22} &  (c-\xi) x_{23}\\
 (1-c \xi )x_{31} & x_{31}+(1-c \xi) x_{32} & c (1-\xi) x_{33} \\
\end{pmatrix}.\]
If $\xi\neq 1$ then the solution to $m_{11}=m_{12}=m_{21}=m_{22}=0$ is $x_{11}=x_{12}=x_{21}=x_{22}=0$, hence $M=0$ has no solution with $X\in \GL_3(\C)$. If $\xi=1$ then the solution to $m_{13}=m_{23}=m_{31}=m_{32}=0$ is $x_{13}=x_{23}=x_{31}=x_{32}=0$, and the solution to $m_{11}=m_{12}=m_{22}=0$ is given by $x_{21}=0$, $x_{11}=x_{22}$. The lemma follows.
\end{proof}
\begin{proposition} \label{pro-exceptional}
 Let $\sigma$ be an exceptional supercuspidal representation of $\GL_2(F)$ and $\chi$ be a unitary character of $F^\times$. Let $\vp$ be an $L$-parameter of $\GL_3(F)$ corresponding to the representation $\displaystyle \Ind_{P_{2,1}}^{\GL_3(F)}(\sigma \otimes \chi)$. The component group of $[\vp]$ is trivial.
\end{proposition}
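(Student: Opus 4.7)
The approach will follow the structure of the dihedral case (Proposition \ref{pro-dihedral}), but will replace its explicit matrix-generator calculation with a conceptual argument exploiting the irreducibility of $\vp_\sigma$ as a two-dimensional $W_F$-representation. This irreducibility holds because $\sigma$ is supercuspidal, so $\vp_\sigma$ is trivial on $\SL_2(\C)$ and its restriction to $W_F$ is an irreducible two-dimensional representation. As in earlier cases, $\vp$ takes the block-diagonal form $\vp(w) = \diag(\vp_\sigma(w),\vp_\chi(w))$ in $\GL_3(\C)$, so $Z_{\PGL_3(\C)}(\mathrm{Im}([\vp]))$ can be computed blockwise.

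The plan is to fix a candidate $[X] \in Z_{\PGL_3(\C)}(\mathrm{Im}([\vp]))$ and write it in $(2{+}1)$-block form $X = \left(\begin{smallmatrix} A & \mathbf{b} \\ \mathbf{c}^{T} & d \end{smallmatrix}\right)$ with $A \in \Mat_2(\C)$, $\mathbf{b},\mathbf{c} \in \C^2$, $d \in \C$. For each $w \in W_F$, Lemma \ref{lem-centralizers} supplies a cube root of unity $\xi_w$ with $X\vp(w) = \xi_w \vp(w) X$. Expanding this identity blockwise, the off-diagonal equations $\vp_\sigma(w)\mathbf{b} = (\vp_\chi(w)/\xi_w)\mathbf{b}$ and $\vp_\sigma(w)^{T}\mathbf{c} = (\xi_w \vp_\chi(w))\mathbf{c}$ (holding for every $w \in W_F$) would make $\C\mathbf{b}$ and $\C\mathbf{c}$ common invariant lines for $\vp_\sigma(W_F)$ and $\vp_\sigma(W_F)^T$ respectively; irreducibility of $\vp_\sigma$ then forces $\mathbf{b}=\mathbf{c}=0$. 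Invertibility of $X$ now requires $d \neq 0$, so the $(3,3)$-equation $d\vp_\chi(w) = \xi_w \vp_\chi(w) d$ forces $\xi_w = 1$ for every $w$. Finally, Schur's lemma applied to $A\vp_\sigma(w) = \vp_\sigma(w)A$ gives $A = \alpha I_2$ for some $\alpha \in \C^\times$.

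Assembling these pieces identifies $Z_{\PGL_3(\C)}(\mathrm{Im}([\vp]))$ with the connected one-parameter torus $\{[\diag(\alpha, \alpha, d)] : \alpha, d \in \C^\times\}$, giving $\mathcal{S}_{[\vp]} = 1$. I expect the only real bookkeeping challenge to be the cube-root-of-unity factor $\xi_w$: one must obtain $\mathbf{b}=\mathbf{c}=0$ \emph{and} $d \neq 0$ in tandem before being allowed to conclude $\xi_w = 1$, so the logical order of the four block equations matters. The argument is insensitive to whether the projective image of $\vp_\sigma$ is $A_4$ or $S_4$ since only irreducibility enters, which explains why both exceptional subcases collapse to the same answer (and, notably, to the same answer as the dihedral case).
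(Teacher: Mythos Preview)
Your argument is correct, and it takes a genuinely different route from the paper's proof. The paper proceeds by choosing explicit generators for the projective image: when $[\mathrm{Im}(\vp_\sigma)]\cong A_4$ it takes $A=\bigl[\begin{smallmatrix}0&1&0\\2&0&0\\0&0&c\end{smallmatrix}\bigr]$ and $B=\bigl[\begin{smallmatrix}1&1&0\\0&1&0\\0&0&c\end{smallmatrix}\bigr]$, computes $Z_{\PGL_3(\C)}(A)$ via Lemma~\ref{lem-dihedral-1} and $Z_{\PGL_3(\C)}(B)$ via Lemma~\ref{lem-tetrahedral-2}, intersects them to get $\{[\diag(\alpha,\alpha,1)]\}$, and then handles $S_4$ by the inclusion $A_4\subset S_4$. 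Your approach instead extracts the only structural fact that matters, namely that $\vp_\sigma|_{W_F}$ is an irreducible two-dimensional representation, and runs a block-matrix Schur argument: the off-diagonal blocks vanish because a nonzero $\mathbf{b}$ or $\mathbf{c}$ would give a one-dimensional invariant line for $\vp_\sigma$ or its dual, invertibility then forces $d\neq 0$ and hence $\xi_w=1$, and Schur's lemma finishes the diagonal block. This is cleaner and more general: it treats $A_4$ and $S_4$ uniformly, it subsumes the dihedral case of Proposition~\ref{pro-dihedral} with no extra work, and it would extend without change to any supercuspidal $\sigma$ on a $\GL_m$-block inside $\GL_n$. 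The paper's explicit-generator method, on the other hand, is consistent with its declared goal of computing everything by direct matrix calculations in $\PGL_3(\C)$, and it yields the ad hoc Lemma~\ref{lem-tetrahedral-2} as a byproduct.
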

\begin{proof}
By Proposition \ref{prop-supercuspidal}, $[\mathrm{Im}(\vp_\sigma)]\subset \PGL_2(\C)$ is either the tetrahedral group or the octahedral group. Consider first the case when $[\mathrm{Im}(\vp_\sigma)]$ is the tetrahedral group, which is isomorphic to the alternating group 
\begin{equation*}
 A_4=\langle a,b\mid a^2=b^3=(ab)^3=1\rangle
\end{equation*}
generated by the matrices 
\begin{equation*}
\left[\begin{pmatrix}
0 & 1 \\
2 & 0\\
\end{pmatrix}\right] \text{ and } 
\left[\begin{pmatrix}
1 & 1 \\
0 & 1\\
\end{pmatrix}\right] \in \PGL_2(\C).
\end{equation*}
In this case, $\mathrm{Im}[\vp]$ is generated by the matrices 
\begin{equation*}
 A= \left[\begin{pmatrix}
0 & 1 & 0 \\
2 & 0 & 0\\
0 & 0 & c
\end{pmatrix}\right] \quad \text{ and} \quad B=\left[\begin{pmatrix}
1 & 1 & 0\\
0 & 1 & 0 \\
0 & 0 & c
\end{pmatrix} \right] 
\end{equation*}
in $\PGL_3(\C),$ where $c \neq 1$ as $c=\vp_\chi (w)$, $w \in W_F$. 
By Lemma \ref{lem-dihedral-1} with $a=2$, the centralizer in $\PGL_3(\C)$ of the matrix $A$ is
\begin{equation*}
\left\{ [X]:X=\begin{pmatrix}
x_{11} & x_{12} & 0 \\
2x_{12} & x_{11} & 0 \\
0 & 0 & x_{33} \\
\end{pmatrix} \in\GL_3(\C)\right\}.
\end{equation*}
Moreover, by Lemma \ref{lem-tetrahedral-2}, the centralizer in $\PGL_3(\C)$ of the matrix $B$ is 
\begin{equation*} 
%\label{centralizerofnil}
 Z_{\PGL_3(\C)}(B)= \left\{ \left[\begin{pmatrix}
x_{11} & x_{12} & 0 \\
0 & x_{11} & 0 \\
0 & 0 & x_{33} \\
\end{pmatrix}\right] : x_{11},x_{33}\in\C^\times, x_{12} \in \C \right\}.
\end{equation*}
Hence, $Z_{\PGL_3(\C)}(\text{Im}([\varphi]))= Z_{\PGL_3(\C)}(A)\cap Z_{\PGL_3(\C)}(B)$ consists of the matrices of the form
\begin{equation*} 
%\label{intersetion}
 \left[\begin{pmatrix}
x_{11} & 0 & 0 \\
0 & x_{11} & 0 \\
0&0&1
\end{pmatrix}\right]
\end{equation*}
with $x_{11}\neq 0$, which implies that $\mathcal{S}_{[\vp]}=1$.

Now assume that $[\mathrm{Im}(\vp_\sigma)]$ is the octahedral group, which is isomorphic to the symmetric group $S_4$. Since the alternating group $A_4$ is a subgroup of $S_4$, the centralizer of the image of the $L$-parameter $[\vp]$ is a subset of 
\begin{equation*}
\left\{\left[ \begin{pmatrix}
x_{11} & 0 & 0 \\
0 & x_{11} & 0 \\
0&0&1
\end{pmatrix}\right]: x_{11}\in \C^\times\right\},
\end{equation*}
and hence the component group is again trivial. 
\end{proof}

\subsubsection{Steinberg Representation $\sigma$} 
\begin{lemma} \label{lem-steinberg} 
Let $S$ be the set defined by
 \begin{equation*} 
S=\left\{ \begin{pmatrix}
 a & b &0 \\ c & d &0 \\ 0 & 0 & 1
\end{pmatrix} \in \GL_3(\C): ad-bc\neq 0 \right\}.
\end{equation*}
The set $\Delta$ of matrices $X\in\GL_3(\C)$ that satisfy $XA-\xi AX=0$ for some third roof of unity $\xi$ for all $A\in S$ is given by 
\begin{equation*}
 \left\{\begin{pmatrix}x_{11}& 0&0 \\ 0& x_{11}&0\\0&0&x_{22}\end{pmatrix}:x_{11},x_{22}\in\C^\times\right\}. 
\end{equation*}
\end{lemma}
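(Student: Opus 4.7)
The plan is to prove the equality of sets by first establishing the reverse inclusion directly from a block-multiplication check, and then obtaining the forward inclusion by probing $S$ with a small number of well-chosen elements. The combination of a sufficiently generic diagonal matrix and a single permutation-type element will pin $X$ down completely.

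For the reverse inclusion, given $X = \diag(t,t,s)$ with $t,s \in \C^\times$ and $A \in S$, I would write
\[
X = \begin{pmatrix} tI_2 & 0 \\ 0 & s \end{pmatrix}, \qquad A = \begin{pmatrix} A' & 0 \\ 0 & 1 \end{pmatrix}
\]
with $A' \in \GL_2(\C)$, and observe that $XA = AX$ immediately from the block structure, so the relation $XA - \xi AX = 0$ holds with $\xi = 1$.

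For the forward inclusion, I would first probe with $A = \diag(a,1,1) \in S$, choosing $a \in \C^\times$ with $a^3 \neq 1$ and $a \neq 1$. A direct computation of $XA - \xi AX$ shows that every entry carries a factor among $\{1-\xi,\; a(1-\xi),\; a - \xi,\; 1 - \xi a\}$. For any cube root of unity $\xi \neq 1$ these choices of $a$ force enough of the entries of $X$ to vanish that $X$ fails to be invertible, so we must have $\xi = 1$. The relation $XA = AX$ then forces $x_{12} = x_{13} = x_{21} = x_{31} = 0$, by the same reasoning as in Lemma \ref{lemma-prin-1}. Repeating with $A = \diag(1,b,1) \in S$ for a similar choice of $b$ eliminates $x_{23}$ and $x_{32}$, so $X$ must be diagonal.

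Finally, I would apply the defining relation to $A = \left(\begin{smallmatrix} 0 & 1 & 0 \\ 1 & 0 & 0 \\ 0 & 0 & 1 \end{smallmatrix}\right) \in S$. For a diagonal $X = \diag(x_{11}, x_{22}, x_{33})$, the $(1,2)$ and $(2,1)$ entries of $XA - \xi AX$ are $x_{11} - \xi x_{22}$ and $x_{22} - \xi x_{11}$. Setting them to zero gives $\xi^2 = 1$; combined with $\xi^3 = 1$ this yields $\xi = 1$, hence $x_{11} = x_{22}$, while the $(3,3)$ entry $x_{33}(1-\xi)$ is then automatically zero. This produces exactly the claimed description of $\Delta$. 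The main conceptual subtlety is that $\xi$ is permitted to depend on $A$, so each probing step must independently rule out the nontrivial cube roots of unity before extracting information from the $\xi = 1$ case; the choice of generic $a$ and $b$ in the diagonal probes is precisely what makes this ruling-out automatic.
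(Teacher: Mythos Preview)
Your proof is correct and follows the same overall strategy as the paper's: test the defining relation on a few well-chosen elements of $S$ to rule out $\xi\neq 1$ and to pin down the shape of $X$. The details differ slightly. The paper uses a single diagonal probe $B=\diag(a,a^{-1},1)$ with $a^2\neq 1$, $a^3\neq 1$, which has three distinct eigenvalues and therefore forces $X$ to be diagonal in one step; you instead use two probes $\diag(a,1,1)$ and $\diag(1,b,1)$, each with a repeated eigenvalue, which together accomplish the same thing. For the final constraint $x_{11}=x_{22}$, the paper tests against a \emph{general} element of $S$ (implicitly using that the $(3,3)$ entry forces $\xi=1$), whereas you use the specific permutation matrix and extract $\xi^2=1$ from the off-diagonal equations. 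Your explicit treatment of the reverse inclusion and your remark that $\xi$ may depend on $A$ are good additions; the paper leaves both implicit. One small point: your appeal to ``the same reasoning as in Lemma~\ref{lemma-prin-1}'' is only by analogy, since that lemma assumes three pairwise distinct diagonal entries while your probe $\diag(a,1,1)$ has a repeated entry---the direct computation you outline is what actually does the work.
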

\begin{proof}
Let $X \in \GL_3(\C)$, $a\in \C^\times$ satisfy $a^2\neq 1$ and $a^3\neq 1$, and
$B=\left(\begin{smallmatrix}
a & 0 & 0 \\
0 & a^{-1} & 0 \\
0 & 0 & 1
\end{smallmatrix}\right)\in S$.
Consider
\begin{equation*}
 M= XB-\xi BX=\begin{pmatrix}
a(1-\xi)x_{11} & (a^{-1}-\xi a)x_{12} & (1-\xi a)x_{13} \\
(a-\xi a^{-1})x_{21} & a^{-1}(1-\xi) x_{22}& (1-\xi a^{-1})x_{23} \\
(a-\xi) x_{31}& (a^{-1}-\xi)x_{32} &(1-\xi) x_{33}\end{pmatrix}.
\end{equation*}
The solution to $m_{31}=m_{32}=0$ is $x_{31}=x_{32}=0$. If $\xi\neq 1$, then the solution to $m_{33}=0$ is $x_{33}=0$, hence $M=0$ has no solution with $X\in \GL_3(\C)$ in this case. Now, assume that $\xi=1$. Then the solution to $m_{12}=m_{13}=m_{21}=m_{23}=0$ is $x_{12}=x_{13}=x_{21}=x_{23}=0$. We deduce that every element of $\Delta$ is diagonal. Finally, for $X\in\GL_3(\C)$ diagonal, we have
\[
 X\begin{pmatrix}
 a & b &0 \\ c & d &0 \\ 0 & 0 & 1
\end{pmatrix}-\begin{pmatrix}
 a & b &0 \\ c & d &0 \\ 0 & 0 & 1
\end{pmatrix}X= \begin{pmatrix} 0 & b(x_{11}-x_{22}) & 0 \\ c(x_{22}-x_{11}) & 0 & 0 \\
0 & 0 & 0
\end{pmatrix},
\]
and this matrix is zero for any choice of $a,b,c,d$ if and only if $x_{11}=x_{22}$. The lemma follows.
 \end{proof}
 
\begin{proposition} \label{pro-st-char}
Let $\sigma$ be the Steinberg representation of $\GL_2(F)$ and $\chi$ be a unitary character of $F^\times$. Let $\vp$ be an $L$-parameter of $\GL_3(F)$ corresponding to the representation $\displaystyle \Ind_{P_{2,1}}^{\GL_3(F)}(\sigma \otimes \chi)$. The component group of $[\vp]$ is trivial.
\end{proposition}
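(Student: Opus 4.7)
The plan is to compute $\mathrm{Im}([\vp])\subset\PGL_3(\C)$ explicitly and then reduce the centralizer computation directly to Lemma~\ref{lem-steinberg}. From \eqref{stparameter} (with $\mathrm{St}_2(\mu)$ in place of $\sigma$, where $\mu$ and $\chi$ are unitary characters of $F^\times$), the $L$-homomorphism $\vp:W_F\times\SL_2(\C)\to\GL_3(\C)$ is block-diagonal with $\vp_\sigma$ in the upper-left $2\times 2$ block and $\vp_\chi$ in the lower-right entry. In particular, $\vp|_{\SL_2(\C)}$ is the inclusion $g\mapsto\bigl(\begin{smallmatrix} g & 0\\ 0 & 1\end{smallmatrix}\bigr)$, while $\vp(w,1)$ takes values in the diagonal torus of $\GL_3(\C)$ for every $w\in W_F$. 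Consequently,
$$
\mathrm{Im}([\vp])\supseteq\left\{\left[\begin{pmatrix} g & 0\\ 0 & 1\end{pmatrix}\right]:g\in\SL_2(\C)\right\},
$$
and this right-hand side is contained in the projection to $\PGL_3(\C)$ of the set $S$ of Lemma~\ref{lem-steinberg}.

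Inspecting the proof of Lemma~\ref{lem-steinberg}, the only matrices actually invoked are the diagonal elements $\diag(a,a^{-1},1)$ (for $a\in\C^\times$ with $a^2\neq 1$ and $a^3\neq 1$), together with one generic block matrix of the form in $S$ having a nonzero off-diagonal entry in the upper-left block. Both types already lie in the $\SL_2(\C)$-image (the diagonal ones from $\diag(a,a^{-1})\in\SL_2(\C)$, and the generic one from, e.g., $\bigl(\begin{smallmatrix} 1 & 1\\ 0 & 1\end{smallmatrix}\bigr)\in\SL_2(\C)$). Therefore the argument of Lemma~\ref{lem-steinberg} applies verbatim to $\mathrm{Im}([\vp])$ and yields
$$
Z_{\PGL_3(\C)}(\mathrm{Im}([\vp]))\subseteq\left\{\left[\diag(x_{11},x_{11},x_{22})\right]:x_{11},x_{22}\in\C^\times\right\}.
$$

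For the reverse inclusion, any $[\diag(x_{11},x_{11},x_{22})]$ commutes with $\bigl[\bigl(\begin{smallmatrix} g & 0\\ 0 & 1\end{smallmatrix}\bigr)\bigr]$ for all $g\in\SL_2(\C)$ (because $x_{11}I_2$ is central in $\GL_2(\C)$) and with every element of the diagonal $W_F$-image. Hence $Z_{\PGL_3(\C)}(\mathrm{Im}([\vp]))\cong\C^\times$ (parameterized by $x_{22}/x_{11}$ modulo the center of $\GL_3(\C)$), which is connected, so $\mathcal{S}_{[\vp]}=1$. The only mildly subtle point is that the $W_F$-image, while diagonal in $\GL_3(\C)$, need not itself lie in $S$; this causes no difficulty because the centralizer already produced from the $\SL_2(\C)$-image consists of diagonal matrices and hence automatically centralizes the $W_F$-image.
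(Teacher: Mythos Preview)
Your proof is correct and follows essentially the same approach as the paper: both reduce the centralizer computation to Lemma~\ref{lem-steinberg} and read off that $Z_{\PGL_3(\C)}(\mathrm{Im}([\vp]))\cong\C^\times$ is connected. You are in fact a bit more careful than the paper, explicitly noting that $\mathrm{Im}([\vp])$ need not coincide with all of $S$ but that the specific matrices used in the proof of Lemma~\ref{lem-steinberg} already lie in the $\SL_2(\C)$-image, and separately verifying that the resulting diagonal centralizer commutes with the (diagonal) $W_F$-image as well.
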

\begin{proof}
 As an $L$-homomorphism, $\vp_\sigma$ is trivial on $W_F$ and is the unique, irreducible 2-dimensional representation on $\SL_2(\mathbb{C})$, which is the identity map on $\SL_2(\mathbb{C})$. Hence, $\vp$ is given by 
\begin{align*}
 \nonumber \varphi: W_F\times \SL_2(\C)&\to \GL_3(\C) \\
 \left(w,\begin{pmatrix}a & b\\ c& d\end{pmatrix}\right) &\mapsto \begin{pmatrix}
 a & b &0 \\ c & d &0 \\ 0 & 0 & \chi\circ \Art (w) 
\end{pmatrix}.
\end{align*}
Let $k=\chi\circ \Art(w)$. Projecting $\mathrm{Im}(\vp)$ to $\PGL_3(\C)$, we have 
\begin{equation*}
 \mathrm{Im}([\vp])=\left\{\left[ \begin{pmatrix}
 a & b &0 \\ c & d &0 \\ 0 & 0 & 1
\end{pmatrix}\right]:ad-bc=\frac{1}{k}\right\}.
\end{equation*}
By Lemma \ref{lem-steinberg}, the centralizer of $\mathrm{Im}([\vp])$ is 
\begin{equation*}
 Z_{\PGL_3(\C)}(\text{Im}([\varphi]))= \left\{\left[\begin{pmatrix}x_{11}& 0&0 \\ 0& x_{11}&0\\0&0&x_{22}\end{pmatrix}\right]:x_{11},x_{22}\in\C^\times\right\} 
\end{equation*}
which implies that $\mathcal{S}_{[\vp]}$ is trivial.
\end{proof}
\section{Closing Remarks} \label{closingremarks}
Unlike $\GL_2$, the classification of supercuspidal $L$-parameters for $\GL_3$ is unavailable. Consequently, the above computations do not apply to irreducible supercuspidal representations of $\SL_3$. One possible direction is to investigate finite subgroups of $\PGL_3(\C)$ and compute their centralizers. Additionally, our matrix computations above are expected to apply to the centralizers of  
certain $L$-parameters of $\SL_n$ with $n >3$, such as Steinberg representations and irreducible proper-parabolic inductions from any combinations of $\GL_2$-supercuspidals, Steinberg representations, and 1-dimensional characters.

\bibliographystyle{alpha}
\bibliography{version}

\begin{thebibliography}{ABPS16}

\bibitem[ABPS16]{abps13}
Anne-Marie Aubert, Paul Baum, Roger Plymen, and Maarten Solleveld.
\newblock The local {L}anglands correspondence for inner forms of {$\rm{SL}_n$}.
\newblock {\em Res. Math. Sci.}, 3:Paper No. 32, 34, 2016.

\bibitem[Art06]{art06}
James Arthur.
\newblock A note on {$L$}-packets.
\newblock {\em Pure Appl. Math. Q.}, 2(1, Special Issue: In honor of John H. Coates. Part 1):199--217, 2006.

\bibitem[Art13]{art12}
James Arthur.
\newblock {\em The endoscopic classification of representations}, volume~61 of {\em American Mathematical Society Colloquium Publications}.
\newblock American Mathematical Society, Providence, RI, 2013.
\newblock Orthogonal and symplectic groups.

\bibitem[BH06]{bh06}
Colin~J. Bushnell and Guy Henniart.
\newblock {\em The local {L}anglands conjecture for {$\rm GL(2)$}}, volume 335 of {\em Grundlehren der Mathematischen Wissenschaften [Fundamental Principles of Mathematical Sciences]}.
\newblock Springer-Verlag, Berlin, 2006.

\bibitem[Cho19]{choiymulti}
Kwangho Choiy.
\newblock On multiplicity in restriction of tempered representations of {$p$}-adic groups.
\newblock {\em Math. Z.}, 291(1-2):449--471, 2019.

\bibitem[GK82]{gk82}
S.~S. Gelbart and A.~W. Knapp.
\newblock {$L$}-indistinguishability and {$R$} groups for the special linear group.
\newblock {\em Adv. in Math.}, 43(2):101--121, 1982.

\bibitem[Gol94]{go94sl}
David Goldberg.
\newblock {$R$}-groups and elliptic representations for {${\rm SL}_n$}.
\newblock {\em Pacific J. Math.}, 165(1):77--92, 1994.

\bibitem[GR10]{gr10}
Benedict~H. Gross and Mark Reeder.
\newblock Arithmetic invariants of discrete {L}anglands parameters.
\newblock {\em Duke Math. J.}, 154(3):431--508, 2010.

\bibitem[Hen80]{he80}
Guy Henniart.
\newblock Repr\'esentations du groupe de {W}eil d'un corps local.
\newblock {\em Enseign. Math. (2)}, 26(1-2):155--172, 1980.

\bibitem[Hen00]{he00}
Guy Henniart.
\newblock Une preuve simple des conjectures de {L}anglands pour {${\rm GL}(n)$} sur un corps {$p$}-adique.
\newblock {\em Invent. Math.}, 139(2):439--455, 2000.

\bibitem[HS12]{hs11}
Kaoru Hiraga and Hiroshi Saito.
\newblock On {$L$}-packets for inner forms of {$SL_n$}.
\newblock {\em Mem. Amer. Math. Soc.}, 215(1013):vi+97, 2012.

\bibitem[HT01]{ht01}
Michael Harris and Richard Taylor.
\newblock {\em The geometry and cohomology of some simple {S}himura varieties}, volume 151 of {\em Annals of Mathematics Studies}.
\newblock Princeton University Press, Princeton, NJ, 2001.
\newblock With an appendix by Vladimir G. Berkovich.

\bibitem[Kal16]{kalsim}
Tasho Kaletha.
\newblock The local {L}anglands conjectures for non-quasi-split groups.
\newblock In {\em Families of automorphic forms and the trace formula}, Simons Symp., pages 217--257. Springer, [Cham], 2016.

\bibitem[Kal18]{kalrigd15-global}
Tasho Kaletha.
\newblock Global rigid inner forms and multiplicities of discrete automorphic representations.
\newblock {\em Invent. Math.}, 213(1):271--369, 2018.

\bibitem[Kot86]{kot86}
Robert~E. Kottwitz.
\newblock Stable trace formula: elliptic singular terms.
\newblock {\em Math. Ann.}, 275(3):365--399, 1986.

\bibitem[Kud94]{ku94}
Stephen~S. Kudla.
\newblock The local {L}anglands correspondence: the non-{A}rchimedean case.
\newblock In {\em Motives ({S}eattle, {WA}, 1991)}, volume~55 of {\em Proc. Sympos. Pure Math.}, pages 365--391. Amer. Math. Soc., Providence, RI, 1994.

\bibitem[Lab85]{la85}
J.-P. Labesse.
\newblock Cohomologie, {$L$}-groupes et fonctorialit\'e.
\newblock {\em Compositio Math.}, 55(2):163--184, 1985.

\bibitem[LL79]{ll79}
J.-P. Labesse and R.~P. Langlands.
\newblock {$L$}-indistinguishability for {${\rm SL}(2)$}.
\newblock {\em Canad. J. Math.}, 31(4):726--785, 1979.

\bibitem[Sch13]{scholze13}
Peter Scholze.
\newblock The local {L}anglands correspondence for {$\rm{GL}_n$} over {$p$}-adic fields.
\newblock {\em Invent. Math.}, 192(3):663--715, 2013.

\bibitem[Sha83]{sh83}
Freydoon Shahidi.
\newblock Some results on {$L$}-indistinguishability for {${\rm SL}(r)$}.
\newblock {\em Canad. J. Math.}, 35(6):1075--1109, 1983.

\bibitem[She79]{shel79}
D.~Shelstad.
\newblock Notes on {$L$}-indistinguishability (based on a lecture of {R}. {P}. {L}anglands).
\newblock In {\em Automorphic forms, representations and {$L$}-functions ({P}roc. {S}ympos. {P}ure {M}ath., {O}regon {S}tate {U}niv., {C}orvallis, {O}re., 1977), {P}art 2}, Proc. Sympos. Pure Math., XXXIII, pages 193--203. Amer. Math. Soc., Providence, R.I., 1979.

\bibitem[Tat79]{tate79}
J.~Tate.
\newblock Number theoretic background.
\newblock In {\em Automorphic forms, representations and {$L$}-functions ({P}roc. {S}ympos. {P}ure {M}ath., {O}regon {S}tate {U}niv., {C}orvallis, {O}re., 1977), {P}art 2}, Proc. Sympos. Pure Math., XXXIII, pages 3--26. Amer. Math. Soc., Providence, R.I., 1979.

\bibitem[Wei74]{weil74}
Andr{\'e} Weil.
\newblock Exercices dyadiques.
\newblock {\em Invent. Math.}, 27:1--22, 1974.

\end{thebibliography}

\end{document}